\title{Dynamical Prisoner's Dilemma : A probabilistic framework}
\author{Sylvain Gibaud}
\newcommand{\dro}{\partial_t}  
\newcommand{\R}{\mathbb{R}} 
\newcommand{\E}{\mathbb{E}} 
\newcommand{\N}{\mathbb{N}}
\newcommand{\G}{\mathbb{G}}
\newcommand{\V}{\mathbb{V}} 
\newcommand{\PP}{\mathbb{P}}
\newcommand{\Z}{\mathbb{Z}}
\theoremstyle{plain}
\newtheorem{thm}{Theorem}[section]
\newtheorem*{thm*}{Theorem}
\newtheorem{cort}[thm]{Corollary}
\newtheorem{prop}[thm]{Proposition}
\newtheorem*{prop*}{Proposition}
\newtheorem*{prop4C*}{Proposition 4C}
\newtheorem{lemme}[thm]{Lemma}
\newtheorem*{thmFR2A*}{Théorème 2A}
\newtheorem*{thmFR2B*}{Théorème 2B}
\newtheorem*{thmFR2C*}{Théorème 2C}
\newtheorem*{thmFR3A*}{Théorème 3A}
\newtheorem*{thmFR3B*}{Théorème 3B}
\newtheorem*{thmFR3C*}{Théorème 3C}
\newtheorem*{thmFR3D*}{Théorème 3D}
\newtheorem*{thmFR4A*}{Théorème 4A}
\newtheorem*{thmFR4B*}{Théorème 4B}
\newtheorem*{thmFR4C*}{Théorème 4C}
\newtheorem{thmFR*}{Théorème}
\theoremstyle{remark}
\newtheorem*{remarque}{Remark}
\theoremstyle{definition}
\newtheorem*{idée}{Sketch of the proof}
\newtheorem{definition}[thm]{Definition}
\newtheorem*{Not}{Notations}
\begin{document}

\maketitle

\begin{abstract}
We put a probabilistic framework on the Demographic Prisoner's Dilemma. In this model, cooperating and defecting individuals are placed on a torus to move and play prisoner's dilemma game, if they are on the same site. Each individual accumulates its payoff into a quantity called wealth. If an individual becomes wealthy enough, it can have an offspring. If its wealth becomes negative, it disappears.

In this framework we prove that if if the Sucker payoff is far greater than the Reward then for all initial state almost surely all cooperators will die. Moreover if the Temptation payoff (resp. Reward) are far greater than the Punition (resp. Sucker payoff) then for all initial state with positive probability cooperators and defectors live \emph{ad vitam eternam}. We also set a Mean Field model on the demographic prisoner's dilemma and prove on a linearized version of the Mean Field model that with weaker assumptions with positive probability Cooperators live \emph{ad vitam eternam}.
\end{abstract}

\tableofcontents
\section{Introduction}
Playing particle systems as a modeling tool in the evolution literature have been introduced by \cite{smith1973lhe}. Since, many ecological models have used playing particle system's approach (see a list of examples in the book of \cite{hartl1997principles}). Yet a lot of the literature on the evolutionary games have been focused on random matching over finites games models \cite{boylan1992LawLargeNumberRandom,ellison1994cooperation,weibull1997evolutionary}. Then motivated by biological applications, spatiality have been introduced in models \cite{nowak1993spatial,szabo1998evolutionary,hauert2002effects}. In those previous models, individuals cannot move and evolution is managed by a Wright-Fisher generation system (\emph{i.e.} at each step of the evolution one whole generation gives birth to another generation and dies).\\
\cite{epstein1998zones} introduced a model where individuals can move and are not synchronized (they die at different times and gives birth at different times). This model is called Demographic Prisoner's dilemma. Before explaining the model and a bit of its probabilistic framework, let us give an intuition inspired by the article of \cite{turner1999prisoner}. \\
You put viruses on a torus. Each virus has embedded in his RNA one of the two following behaviors: either she manufactures diffusible (shared) intracellular product, either she sequesters it and take advantage of the virus producing the product. Each virus can move on a fixed torus. They also have energy such that when they have intracellular product they get energy and when they manufacture it or sequester nothing they lose energy. A virus without energy dies. A virus with a lot of energy can split and bring a child (with its RNA) in the torus. \\
\begin{Not}\textcolor{white}{text}

We denote by $\N$ the set of non negative integers and $\N^*$ the set of positive integers.

\end{Not}\textcolor{white}{text}\\
The model is:
\begin{enumerate}
		\setlength\itemsep{0.2em}
		\item Let $(\Z / m \Z)^2$ be a fixed torus with $m \in \N^*$. It is the space where individuals move.
		\item We make the assumption that the torus cannot bear an infinite number of individuals. Let us call $K \in \N^*$ the maximum number of individuals in the torus. 
		\item The individuals move according to continuous time independent symmetric simple random walks. At random times (of rate $d\in \R_+^*$) an individual moves from his position to one of its nearest neighbors with equal probability, as shown in the following example of evolution.
			\begin{figure}[h!]
			\caption{Example of moving of particles}\label{Figure example moving}
			\begin{center}
					\includegraphics[scale=0.25]{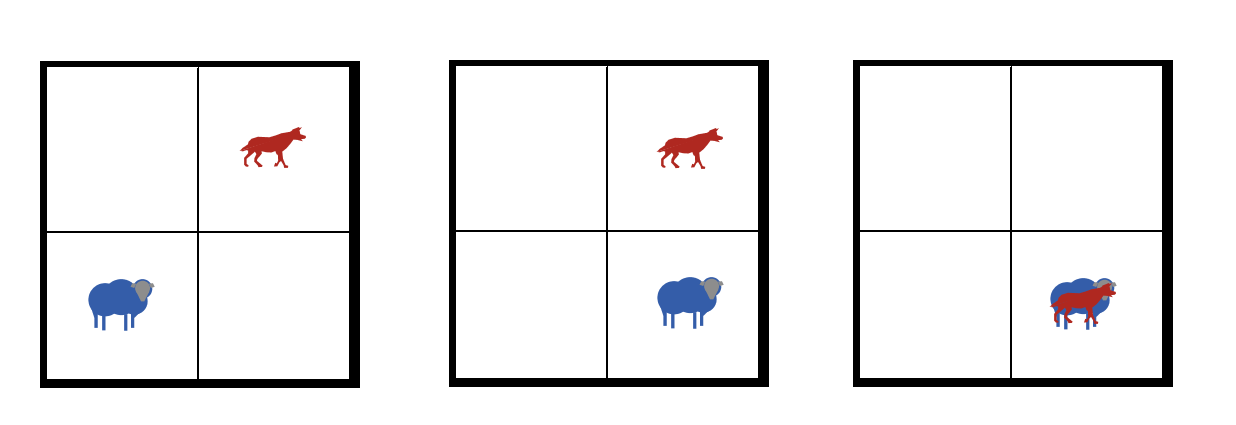}
				\end{center}
				\end{figure}
		\item Each individual has a wealth. If the wealth becomes non positive, the individual dies (and then stop playing with the other individuals).
		\item   The wealth changes through games. The game is the prisoner's dilemma:
		\begin{itemize}
			\setlength\itemsep{0.2em}
			\item  The players have two actions to \textit{Cooperate} or to \textit{Defect}.
			\item If both \textit{Cooperate} they get a Reward $\mathbf{R}$. But if one of the two {defects}, the \textit{Defector} gets a Temptation payoff $\mathbf{T}$, and this payoff is bigger than the Reward. If a \textit{Cooperator} is being \textit{Defected} instead of getting a reward he gets a Sucker payoff $\mathbf{-S}$. If both players \textit{Defect} (let us call them Player 1 and Player 2)  the nature flips a coin, if it is head Player 1 gets a Punishment payoff of $-2 \mathbf{P}$ and Player 2 gets a payoff of 0, if it is tail Player 2 gets a Punishment payoff of $-2 \mathbf{P}$ and Player 1 gets a payoff of 0.
			\item Then the payoff satisfies $\mathbf{T}>\mathbf{R}>0>$ and $\mathbf{S}>\mathbf{P}>0$. This is summarized in the following payoff matrix (action Top and action Left are \textit{Cooperate}, action Bottom and action Right are \textit{Defect}).\\
				
				\[\left(\begin{array}{cc}
				(\mathbf{R},\mathbf{R}) & (-\mathbf{S},\mathbf{T}) \\ (\mathbf{T},-\mathbf{S}) & {(P_1,P_2) }  
				\end{array}\right),
				\]
				where $(P_1,P_2)$ is a random variable with distribution $\frac{1}{2}\delta_{(-2\mathbf{P},0)} + \frac{1}{2}  \delta_{(0,-2\mathbf{P})}$
				
			\item Playing \textit{Cooperate} is strictly dominated by playing \textit{Defect}.
			\item To determine the action played by the individuals we place ourself in the framework where: "Each individual plays only one action, either he will \textit{Defect} every time either he will \textit{Cooperate} every time". Then a particle has a fixed action (\textit{Cooperate} or \textit{Defect}). There are two kinds of particles: the ones who always \textit{Cooperate} and the ones who always \textit{Defect}.
			\item At the end of each game the wealth are updated, adding the respective payoff of the game played.
			\item To make the games happen, each couple of individuals is given a Poisson process independent of everything of parameter $v$. When this Poisson process realizes, if the individuals are on the same site and if their wealths are positive the individuals play together. Otherwise nothing happens.
		\end{itemize} 		
		\item Each individual is given a Poisson process of parameter $b>0$. When this birth Poisson process realizes if the individual's wealth is more than a given threshold $w_c >0$ and if there is less than $K$ individuals on the torus then the individual gives birth to an offspring. The offspring has the same strategy as its unique parent (it \textit{cooperates} if its parent \textit{cooperates}, \textit{defects} if its parent \textit{defects}). Moreover the fixed birth wealth $0< w_0 < w_c $ of the offspring is given by its parent. That is after the birth (then the parent lose wealth, and the child begins its life with a wealth equal to $w_0$).
		
	\end{enumerate}
	
	 \cite{epstein1998zones} introduced the first Demographic Prisoner's dilemma model, and explore it doing simulations. The demographic prisoner's dilemma has been a popular model \cite{axelrod2000six,ifti2004effects,ohtsuki2006replicator} in theoretical biology. Also since he used multi-agent as a way of modeling evolutionary game theory, Epstein made an useful link between computer sciences and biology as presented by \cite{tumer2004survey}.\\

	  Yet in conclusion of his article Epstein said: "The only claims that can be advanced definitely are that this specific complex of assumptions is \textit{sufficient to generate cooperative persistence} on the timescale explored in the research. [...] Obviously it would be worthwhile to [...] if possible, assess their generality mathematically." \\
	  After this article, \cite{dorofeenko2002dynamical}, assuming statistical independence, proved the convergence of the Epstein model (when the payoff and the step of the grid tend to 0) to a reaction-diffusion process. They also studied via numerical results this reaction diffusion process. \cite{namekata2011effect} add reluctant players (Tit for Tat players where the first action is \textit{Defect}) in the model. They studied the extended model using simulations. \\
	
	The main contributions of this article is generalizing Epstein model mathematically that is: \begin{itemize}
	\item Putting a probabilistic framework on the Epstein model without doing too strong assumptions (slightly modifying the payoff matrix, making individuals moving following independent simple random walks instead of exclusion process).
	\item Proving asymptotic (on time) results on the survival of the cooperators.
	\item Introducing a Mean Field model (or well mixed)
	\item Proving that we can insure a certain level of cooperation without too strong assumption on the payoff matrix.
	\end{itemize}
	
	Let us introducing the first results.\\
	
	\begin{Not}
	\begin{itemize}
	\item We call configuration the data of the positions, the behaviors, and the wealth of every particles. Then a configuration is an element of $E := ((\Z / m \Z)^2 \times \R_+ \times \lbrace -1,0,1 \rbrace)^K$ where $\lbrace -1,0,1 \rbrace$ indicates the behavior of a particle.
	\item Since Markov processes are defined by a collection of probability distribution on the space of right continuous, left limited functions from $\R_+$ to $E$ (this space is denoted $D(\R_+,E)$) indexed by $E$. We denote the distributions of the previous model $(\PP_\sigma)_{\sigma \in E}$ ($\sigma$ represents the initial state). 
	\end{itemize}
	\end{Not}
	
	\begin{thm*}\label{Intro thm Extinction presque sure}
	
	There exists a constant $\mu >0$ depending only on $v,d,K,b,w_0,w_c$ and $m$ such that if: 
	\[
	\mu \mathbf{R} < \mathbf{S}
	\]
	then for each initial configuration $\sigma$ with at least one defector with positive wealth: \\
	\[\PP_\sigma(\lbrace\text{Eventually all the cooperators will be dead}\rbrace) = 1 .\]
	\end{thm*}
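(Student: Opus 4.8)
The plan is to show that, under the hypothesis $\mathbf{S}>\mu\mathbf{R}$, the population of cooperators is stochastically dominated by a \emph{subcritical} branching process, so that it dies out almost surely. The heuristic is that the only way a cooperator can gain wealth is by playing other cooperators ($+\mathbf{R}$ each), which it does at rate at most $(K-1)v$, whereas each game against a defector costs it $\mathbf{S}$, which for $\mathbf{S}$ large dominates any realistic accumulation of rewards. If a defector is present, encounters with it are frequent enough (because independent random walks on the finite torus co-locate with a frequency of order $1/m^2$) that the wealth of every cooperator acquires a strong negative drift and is driven to $0$, i.e.\ to death, before the cooperator can sustain a lineage through reproduction.

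Concretely, I would first fix a live defector $d$ and a cooperator $c$ and discretise time into blocks of a fixed length $\tau$ of the order of the mixing time of the walk on $(\Z/m\Z)^2$. Two facts drive the argument. First, by irreducibility of the simple random walk on the finite torus, from \emph{any} starting positions the trajectories of $c$ and $d$ meet during a block and the game clock of the pair (rate $v$) rings, so that $c$ plays $d$ at least once per block with a probability bounded below by some $\rho=\rho(v,d,m)>0$; such an encounter subtracts $\mathbf{S}$ from $c$'s wealth, or kills it outright. Second, the number of cooperation games $c$ plays in a block is dominated by a Poisson variable of parameter $(K-1)v\tau$, so with overwhelming probability $c$'s wealth rises by at most a fixed multiple of $(K-1)v\tau\,\mathbf{R}$. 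Choosing $\mu$ (a function of $v,d,K,b,w_0,w_c,m$ only, through $\tau$, $\rho$ and these Poisson tails) large enough that $\rho\,\mathbf{S}$ beats the expected per-block reward makes the block-to-block wealth increments of a cooperator a supermartingale whose drift is negative and bounded away from $0$.

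From this negative drift I would run a gambler's-ruin / supermartingale estimate to control reproduction: starting from the birth wealth $w_0$, the probability that a cooperator ever climbs to the threshold $w_c$ before hitting $0$ is at most some $q<1$, and the expected number of offspring it produces before dying (births occur at rate $b$ only while the wealth exceeds $w_c$) can be made strictly less than $1$ by enlarging $\mu$. Hence each freshly born cooperator has mean offspring number $<1$, the cooperator genealogy is dominated by a subcritical Galton--Watson process, and it becomes extinct almost surely; the global bound $K$ on the total number of individuals only suppresses births further and does not affect the conclusion. Cooperators present in the initial configuration $\sigma$ may carry arbitrarily large wealth and thus reproduce in an initial burst, but the negative drift guarantees this burst is almost surely finite, after which subcriticality takes over.

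The main obstacle is the assumption implicit in the drift bound, namely that a defector with positive wealth remains available throughout the time cooperators survive; were the defectors to die out first, the drift would reverse and cooperators could flourish. I would therefore prove a separate persistence lemma for defectors: a defector gains $\mathbf{T}>0$ at every encounter with a cooperator and loses wealth only in defector--defector games (rate $\le (K-1)v$, cost $2\mathbf{P}$) and at its own births ($-w_0$). While cooperators are present the defectors are thus continually fed, and one shows that the defector population, stochastically bounded below by a suitable birth-and-death process, does not reach $0$ before the cooperators do. Combining this persistence with the subcritical domination yields $\PP_\sigma(\{\text{eventually all cooperators dead}\})=1$. Making the co-location lower bound $\rho$ and the defector-persistence comparison uniform over all (possibly adversarial) configurations is the delicate technical point on which the whole argument rests.
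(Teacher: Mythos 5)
There is a genuine gap, and it sits exactly at the step your whole argument rests on: the claim that the mean offspring number of a cooperator ``can be made strictly less than $1$ by enlarging $\mu$.'' This is false, because the hypothesis $\mu\mathbf{R}<\mathbf{S}$ constrains only the \emph{ratio} of the payoffs, not their absolute scale, while reproduction is governed by the fixed thresholds $w_0,w_c$. Take $\mathbf{R}\gg w_c$ and $\mathbf{S}=2\mu\mathbf{R}$: a newborn cooperator that wins a single cooperation game (an event of probability bounded below, e.g.\ when many cooperators and one distant defector are present, and not controlled by $\mu$) jumps to wealth $\approx\mathbf{R}\gg w_c$ and then produces offspring at rate $b$ until its next defector encounter. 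The expected waiting time for that encounter is bounded below by a constant $c_0(v,d,m)>0$ (co-location plus the rate-$v$ game clock), \emph{independently of} $\mu$, since making $\mathbf{S}$ larger only makes the eventual death blow harder, not sooner. Hence the mean offspring number is bounded below by a quantity of order $b\,c_0$ uniformly in $\mu$, and for large $b$ it exceeds $1$: no choice of $\mu=\mu(v,d,K,b,w_0,w_c,m)$ yields a subcritical Galton--Watson domination. Your per-individual negative drift is correct (each cooperator does die a.s.), but individual extinction plus genealogy control via subcriticality is the wrong decomposition here; the population can keep replenishing itself even though every individual is doomed.

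What saves the theorem---and what the paper does---is to track the \emph{total} wealth of all cooperators, $\mathfrak{C}^{tot}$, instead of individual lineages. The key structural fact your proposal does not exploit is that births are wealth-neutral: the parent pays $w_0$ and the child receives $w_0$, so reproduction does not change $\mathfrak{C}^{tot}$ at all, and the branching difficulty disappears. Between consecutive cooperator--defector games $\mathfrak{C}^{tot}$ increases by at most $\mathbf{R}$ per Poisson event, and each cooperator--defector game costs $\mathbf{S}$; a uniform-over-configurations exponential tail on the time $\tau_1$ to the next such game (the paper gets this from the finiteness of ``photographs'' plus the strong Markov property, playing the role of your mixing-time block estimate) gives $\E_{\sigma}(\Delta\mathfrak{C}^{tot})\leq\mu\mathbf{R}-\mathbf{S}<0$ per epoch. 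Since $\mathfrak{C}^{tot}$ is bounded below by $-K\mathbf{S}$, summing these increments shows $\sum_k\PP_\sigma(\exists\text{ live cooperator at }\tau_k)<\infty$, and Borel--Cantelli concludes. Incidentally, your ``delicate'' defector-persistence lemma is a one-liner in this model: defectors lose wealth only in defector--defector games, whose randomized punishment kills at most one player per game, so the defector population can never reach zero; no comparison with a birth-and-death process is needed.
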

	\textcolor{white}{text}\\
	\begin{thm*}\label{Intro thm Coexistence ad vitam eternam Spatial}
	There exists a constant $\nu,\nu' > 0$ depending only on $v,d,K,b,w_0,w_c$ and $m$ such that if: 
	
\begin{align*}
	\nu (\mathbf{S}+w_0) < \mathbf{R}
	\\
	\nu' (2\mathbf{P} + w_0) < \mathbf{T}
\end{align*}
	
	then for each initial configuration $\sigma$ with at least two cooperators and one defector with positive wealth: \\
	
	\[
	\PP_\sigma(\lbrace \text{The cooperators and defectors present in the beginning never die} \rbrace) > 0.
	\]
	\end{thm*}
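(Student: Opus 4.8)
The plan is to exhibit a single event of positive probability on which every individual present at time $0$ keeps a strictly positive wealth forever, splitting it into a finite \emph{steering} phase and an infinite \emph{drift} phase and combining them through the Markov property $(\PP_\sigma)_{\sigma\in E}$. The guiding dichotomy is the balance sheet of each wealth: a cooperator gains $\mathbf{R}$ at every game against a cooperator and loses at most $\mathbf{S}$ against a defector, while a defector gains $\mathbf{T}$ against a cooperator and loses at most $2\mathbf{P}$ against a defector; in addition each individual loses exactly $w_0$ whenever it gives birth. Thus, over any finite window, a fixed cooperator's wealth changes by at least $\mathbf{R}\,g-(\mathbf{S}+w_0)(h+\beta)$, where $g,h,\beta$ count respectively its games against cooperators, its games against defectors, and its births, and a defector's by at least $\mathbf{T}\,g'-(2\mathbf{P}+w_0)(h'+\beta')$. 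Consequently, once one controls the combinatorial ratios $(h+\beta)/g$ and $(h'+\beta')/g'$ by a constant, the hypotheses $\nu(\mathbf{S}+w_0)<\mathbf{R}$ and $\nu'(2\mathbf{P}+w_0)<\mathbf{T}$ make each balance sheet strictly positive.

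First I would steer the system, in a bounded time window and with probability bounded below, into a \emph{safe} configuration $\Sigma^*$: the population equals its cap $K$, it consists of the $d_0$ initially present defectors together with $K-d_0$ cooperators (the initial ones plus cooperator offspring), and every wealth exceeds a large threshold $W^*$. This uses only the positivity of the rates $v,b,d$ and the irreducibility of the walks on the finite torus $(\Z/m\Z)^2$: with positive probability the (at least two) initial cooperators repeatedly meet, play, build up wealth and give birth until the cap is reached, while the defectors are kept off the occupied sites during game rings, so that no defector ever plays a defector and no defector gives birth during this phase. Because $\mathbf{R}\gg\mathbf{S}$ and $\mathbf{T}\gg\mathbf{P}$, one can simultaneously raise all wealths above $W^*$, including those of the defectors (fed by $\mathbf{T}$) and of the freshly born cooperators (started at $w_0$ and fed by $\mathbf{R}$). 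Reaching the cap $K$ is the device that will \emph{block all further births}, freezing the population size and composition as long as nobody dies.

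From $\Sigma^*$ the population stays equal to $K$ as long as no death occurs, so on the survival event the process coincides with the closed $K$-particle system in which positions are $K$ independent random walks and pairwise games ring at rate $v$ while co-located. For this finite-state position chain I would use its regeneration times (returns to a reference position vector), which cut the trajectory into i.i.d.\ cycles with exponential tails. Since the difference of two independent walks is ergodic on the torus with uniform stationary law, each ordered pair is co-located a fraction $1/m^2$ of the time, so over a cycle the counts $g,h$ (resp.\ $g',h'$) of a fixed individual are comparable up to a factor depending only on $K,m$ (the number of potential partners of each type being at most $K$); this is exactly the $\nu,\nu'$ of the statement, and with no births at the cap the terms $\beta,\beta'$ vanish. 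The payoff inequalities then make the per-cycle expected wealth increment of \emph{every} individual strictly positive and of light tail, so each wealth dominates a random walk with positive drift started at $W^*$. A biased walk started high stays positive forever with probability close to $1$; controlling also the intra-cycle dips, which are of light tail, a union bound over the $K$ individuals together with $\PP(\Sigma^*\text{ reached})>0$ yields $\PP_\sigma(\text{nobody dies})>0$.

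The main obstacle is the control of births, that is of the population composition over infinite time: a single new defector born into the system degrades every cooperator's balance sheet, and left unchecked the defectors, each fed by $\mathbf{T}$, would proliferate up to the cap and drive the cooperators extinct, exactly as in the first theorem. Two things must be secured: that the cap $K$ can be used to switch births off while keeping the composition cooperator-favourable (this is what forces $\nu,\nu'$ to grow with $K$, since in the worst admissible composition a cooperator may face $K-2$ defectors for a single cooperator partner), and that the i.i.d.-cycle comparison survives the conditioning on no death. Making the per-cycle count ratios and their fluctuations uniform over the finitely many position configurations, so that the drift bound and the random-walk domination hold simultaneously for all $K$ wealths, is the delicate quantitative step; everything else reduces to the Markov property and the positive-drift random-walk estimate.
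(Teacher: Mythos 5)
Your proposal is correct in outline and shares its overall skeleton with the paper's proof: both replace the true dynamics by an auxiliary system in which nobody is absorbed at zero wealth (your closed $K$-particle system, the paper's \textbf{Ghost system} in which individuals keep playing even with negative wealth), prove in that linear system that all wealths stay positive forever with high probability when started high, transfer back to the \textbf{True system} by the pathwise coincidence of the two systems on the all-wealths-positive event, and prepend a finite steering event of positive probability glued by the Markov property. The genuine differences lie in the two technical devices. For births, you switch them off by steering the population up to the cap $K$ (an offspring needs a slot with $Z^j=-1$, so at the cap the composition freezes and the drift-phase system is genuinely closed and time-homogeneous); the paper instead keeps defector births in its Ghost system and neutralizes birth costs by forbidding cooperator births while subtracting $w_0$ from cooperator wealths at every step. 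Your device yields a cleaner, autonomous drift phase but leans on the cap mechanism; the paper's works without using the cap as a switch, the cap entering only through the uniform constants, exactly as in your worst-case composition count forcing $\nu,\nu'\sim K$. For divergence, you use regeneration cycles of the finite position chain, i.i.d.\ light-tailed cycle increments of positive mean, and standard positive-drift random-walk estimates together with control of intra-cycle dips; the paper samples each wealth at the stopping times $\tau^i_n$ of its favorable games, proves the uniform geometric tail bound of Lemma \ref{Lemme Maj unif Coexistence}, and invokes a general divergence lemma (Corollary \ref{Corol Divergence a.s.}) proved via Doob decomposition, Burkholder-Davis-Gundy inequality and Borel-Cantelli. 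Your route is the more elementary one; the paper's lemma is more reusable, being applied verbatim to cooperators and to defectors. Two points must be pinned down in a full write-up: the closed system of your drift phase must consist exactly of the individuals alive when the safe configuration is reached (individuals dead from the start occupy slots toward the cap but must not play, otherwise the pathwise coupling with the True system fails), and the cycle argument must be run unconditionally in the closed system and only afterwards restricted to the all-positive event; your earlier sentence asserting coincidence on the survival event is the right formulation, whereas the later phrase about the comparison \emph{surviving the conditioning on no death} would, taken literally, change the law and destroy the i.i.d.\ cycle structure.
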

	
	Those are qualitative results. The first theorem says that when the Sucker payoff is far bigger than the Reward payoff, then with any initial configuration (except the trivial ones) the cooperators die almost surely. The second theorem says that if the Reward is far bigger than the sum of the Sucker payoff and the birth wealth then from every non trivial configuration with positive probability the cooperators will never die. \\
	The two theorems talk of the red and blue area (or the areas pointed by the arrows) on the following graph. 
	In order to understand better the behavior of this system, we draw some simulations with the following data:
	\begin{itemize}
		\item The size of the torus is $m = 7$.
		\item There are initially 10 cooperators and 10 defectors.
		\item Initially all individuals have a wealth of $10$.
		\item The rate of the game Poisson processes is the same than the rate of the moving Poisson processes $d=v=5$.
		\item The Temptation payoff is $\mathbf{T}=\mathbf{R}+1$ and the Punishment payoff is $\mathbf{P}=\mathbf{S} - 1$.
		\item The stopping time of the simulation is 10,000 moves, games or birth Poisson process realizations. (Usually in the previous simulations this density becomes nearly constant after 3,000 realizations of Poisson processes).
		\item The critical wealth (wealth that allows to give birth) is: $w_c =10$.
		\item The birth wealth for a child is: $w_0 = 3$.
		\item The maximum number of particles $K = 10^m=10^7$.
	\end{itemize}
	
	\textit{Method}\\
	We make a batch of 100 simulations of the system for each couple of payoff $(\mathbf{R},\mathbf{S})$ ($\mathbf{R},\mathbf{S}\in \lbrace0,\dots,100\rbrace$). At the end of each simulation we measure the number of cooperators with a positive wealth. After drawing these simulations we take the average of this measure over all these 100 simulations. Finally we make the figure saying that the color red corresponds to a small number of cooperators with a positive wealth at the end of the simulation, and the color blue corresponds to a high number of cooperators with a positive wealth at the end of the simulation (dark red correspond to 0 cooperator with a positive wealth at the end of the simulation).\\

	\begin{figure}[h]
	\caption{Survival of cooperators in function of the Reward and the Sucker payoffs}\label{Figure Diagramme de phase 3D avec Naissance}
	\begin{center}
	\includegraphics[scale=0.3123]{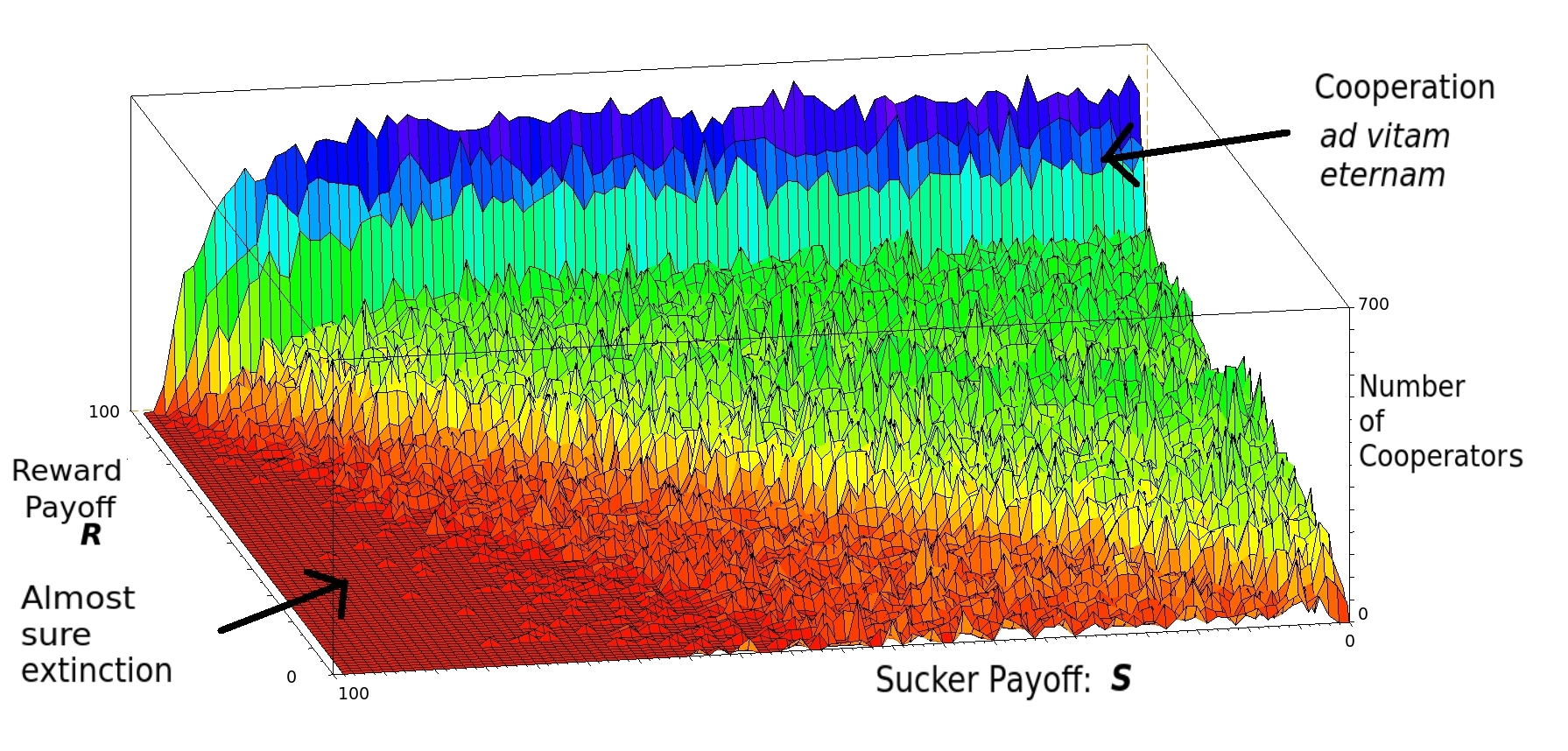}
	\end{center}
	\end{figure}
In order to understand better the area in green and orange (intermediate area between the two pointed areas) we will consider the following Mean Field model:
\begin{itemize}
\item particles stop giving birth.
\item there is no spatial condition.
\item instead of considering a linear interaction with many particles, we consider one defector and one cooperator with non linear interaction (the evolution depends also on the distribution of the process).
\item all particles begin with the same distribution of wealth.
\end{itemize}
The Mean Field system is the Markov process $(\mathfrak{C}(t),\mathfrak{D}(t))_t$. We will look at the distribution of $(\mathfrak{C}(t))_t$. The intuition of $\PP(\mathfrak{C}(t) = w) = p$ is: at time $t$, $p\%$ of the population of cooperators has a wealth $w$. We define $(\mathfrak{C}(t),\mathfrak{D}(t))_t$ by: (with $\beta_0+\rho_0 = 1$ fixed) and an initial measure $m_0$
\begin{align*}
\mathfrak{D}(t_{n+1}^\mathfrak{D}) - \mathfrak{D}(t_{n}^\mathfrak{D}) = \mathds{1}_{\mathfrak{D(t_n^\mathfrak{D})>0}} \,U_\mathfrak{D}(t^\mathfrak{D}_{n+1})\\
\mathfrak{C}(t_{n+1}^\mathfrak{C}) - \mathfrak{C}(t_{n}^\mathfrak{C}) = \mathds{1}_{\mathfrak{C(t_n^\mathfrak{C})>0}} \,U_\mathfrak{C}(t^\mathfrak{C}_{n+1})
\end{align*}
where:
\begin{itemize}
	\item[-] $(t_n^\mathfrak{D})_n$ and $(t_n^\mathfrak{C})_n$ are sequences of Poisson times of intensity $\frac{v}{2}$ (\emph{i.e. } for example with $\mathcal{N}^\mathfrak{D}$ a Poisson process of parameter $\frac{v}{2}$ for all $n\in \N^*$ $t_n^\mathfrak{D}$ is defined by $t_n^\mathfrak{D} = \inf (t>0 / \mathcal{N}^\mathfrak{D}_t \geq n)$).
	\item[-] for $I = \lbrace t_n^\mathfrak{C} ,n \in \N \rbrace \cup \lbrace t_n^\mathfrak{D},n\in \N \rbrace$, all $t \in I$ and $U_\mathfrak{D}(t)$ and $U_\mathfrak{C}(t)$ are random variables independent from everything such that: 
	\[
	U_\mathfrak{D}(t) = \left\{\begin{array}{rl}
	 - 2 \mathbf{P} & \text{with probability } \frac{1}{2}\rho^0\PP(\mathfrak{D}(t)>0)\smallskip\\
	 \mathbf{T} & \text{with probability } \beta^0\PP(\mathfrak{C}(t)>0)\smallskip\\
	 0 & \text{ with probability } 1 - \beta^0\PP(\mathfrak{C}(t)>0) - \frac{1}{2}\rho^0\PP(\mathfrak{D}(t)>0)
	\end{array}\right.
	\]
 and 
	\[
	U_\mathfrak{C}(t) = \left \lbrace\begin{array}{rl}
	 -  \mathbf{S} & \text{with probability } \rho^0\PP(\mathfrak{D}(t)>0)\smallskip\\
	 \mathbf{R} & \text{with probability } \beta^0\PP(\mathfrak{C}(t)>0)\smallskip\\
	 0 & \text{ with probability } 1 - \beta^0\PP(\mathfrak{C}(t)>0) - \rho^0\PP(\mathfrak{D}(t)>0)
	\end{array}\right.
	\]
\end{itemize}
The intuition behind these formulas is that we update (for example $\mathfrak{C}(t)$) following a Poisson process of intensity $\frac{v}{2}$. If $\mathfrak{C}>0$ we update it doing: 
\[
\mathfrak{C}(t) \leftarrow \left \lbrace\begin{array}{rl}
	\mathfrak{C}(t) -  \mathbf{S} & \text{with probability } \rho^0\PP(\mathfrak{D}(t)>0)\smallskip\\
	\mathfrak{C}(t) + \mathbf{R} & \text{with probability } \beta^0\PP(\mathfrak{C}(t)>0)\smallskip\\
	\mathfrak{C}(t) & \text{ with probability } 1 - \beta^0\PP(\mathfrak{C}(t)>0) - \rho^0\PP(\mathfrak{D}(t)>0)
	\end{array}\right.
\]

An important remark is that the law of the wealth of any cooperator, resp any defector, (as a stochastic process) in the spatial model converges in finite dimensional distribution to $\mathfrak{C}$, resp $\mathfrak{D}$ when $m^2=N$ and $d\to +\infty$ then when $N\to +\infty$.\\
This distribution has three principles:
\begin{itemize}
\item A drift (positive or negative)
\item A diffusion
\item An absorbing bound on 0 causing non linearity.
\end{itemize}

\begin{figure}[h!]\label{Fig Dist MF Wealth DPD}
\caption{Distribution of the Mean Field Wealth}
\begin{center}
\includegraphics[scale=0.5]{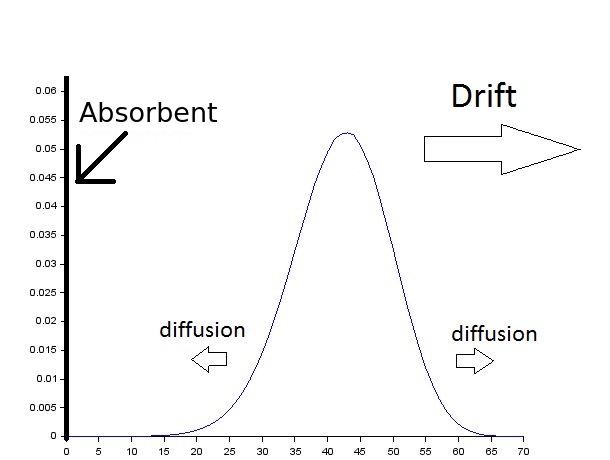}
\end{center}
\end{figure}

Hence this process is non linear. We consider a simpler process instead : a linearized process $(\overline{\mathfrak{C}}(t),\overline{\mathfrak{D}}(t))_t$.

\begin{align*}
\overline{\mathfrak{C}}(t_{n+1}^\mathfrak{C}) - \overline{\mathfrak{C}}(t_{n}^\mathfrak{C}) = \overline{U}_\mathfrak{C}(n) \\
\overline{\mathfrak{D}}(t_{n+1}^\mathfrak{D}) - \overline{\mathfrak{D}}(t_{n}^\mathfrak{D}) = \overline{U}_\mathfrak{D}(n)
\end{align*}
avec $(\overline{U}_\mathfrak{C}(n))_n$ et $(\overline{U}_\mathfrak{D}(n))_n$ two independent sequences of i.i.d. random variables such that
\[
	  	 	\overline{U}_\mathfrak{C}(n) = \left \lbrace\begin{array}{rl}
	  	 	 -  \mathbf{S} & \text{with probability } \rho^0\smallskip\\
	  	 	 \mathbf{R} & \text{with probability } \beta^0\smallskip\\
	  	 	 0 & \text{ with probability } 1 - \beta^0 - \rho^0
	  	 	\end{array}\right.
\]  et
\[
	  	 	\overline{U}_\mathfrak{D}(n) = \left \lbrace\begin{array}{rl}
	  	 	 -  2\mathbf{P} & \text{with probability } \frac{\rho^0}{2}\smallskip\\
	  	 	 \mathbf{T} & \text{with probability } \beta^0\smallskip\\
	  	 	 0 & \text{ with probability } 1 - \beta^0 - \frac{\rho^0}{2}
	  	 	\end{array}\right.
\]

On this linearized system we have the following results:

\begin{thm}\label{Intro Thm Appli MF}
Let us suppose that:
\[
\beta^0 \mathbf{R} - \rho^0 \mathbf{S} >0
\]
Then we have: for any fixed $q_0 >0$ 
\[
\overline{\mathfrak{C}}(t) \longrightarrow +\infty \qquad \PP_{q_0} a.s.
\]

\[
\PP_{q_0} (\forall t\geq 0, \overline{\mathfrak{C}}(t)>0) >0
\]
\end{thm}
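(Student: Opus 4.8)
The plan is to recognize the linearized cooperator process as an embedded random walk observed along a Poisson clock, and then to read off both conclusions from classical fluctuation theory for random walks with positive drift. Write $\Sigma_n := \sum_{k=1}^n \overline{U}_\mathfrak{C}(k)$ and let $N(t)$ be the counting process of the Poisson times $(t_n^\mathfrak{C})_n$ (intensity $\tfrac v2$). Since the trajectory of $\overline{\mathfrak{C}}$ is piecewise constant and jumps only at the $t_n^\mathfrak{C}$, we have $\overline{\mathfrak{C}}(t) = q_0 + \Sigma_{N(t)}$, where the increments $\overline{U}_\mathfrak{C}(k)$ are i.i.d., bounded (valued in $\{-\mathbf S,0,\mathbf R\}$), with mean $\mu := \E[\overline{U}_\mathfrak{C}] = \beta^0 \mathbf R - \rho^0 \mathbf S > 0$ by hypothesis. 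Both assertions therefore reduce to statements about $(\Sigma_n)_n$, because $N(t)\to+\infty$ almost surely (a Poisson process of positive intensity is unbounded) and because $\{\forall t\ge 0,\ \overline{\mathfrak{C}}(t)>0\} = \{\forall n\ge 0,\ q_0 + \Sigma_n > 0\}$.

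For the first assertion I would simply invoke the strong law of large numbers: the increments being i.i.d. and integrable (indeed bounded), $\Sigma_n/n \to \mu > 0$ almost surely, hence $\Sigma_n \to +\infty$ and $q_0 + \Sigma_n \to +\infty$. Composing with $N(t)\to+\infty$ gives $\overline{\mathfrak{C}}(t) = q_0 + \Sigma_{N(t)} \to +\infty$ almost surely.

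For the second assertion I would argue by a Lundberg/ruin-theory estimate. If $\rho^0 = 0$ the increments are non-negative, $q_0 + \Sigma_n \ge q_0 > 0$ for every $n$, and the probability in question equals $1$; so assume $\rho^0>0$. Consider $g(\theta) := \E[e^{-\theta \overline{U}_\mathfrak{C}}] = \rho^0 e^{\theta \mathbf S} + \beta^0 e^{-\theta \mathbf R} + (1-\beta^0-\rho^0)$. Then $g$ is smooth and convex with $g(0)=1$ and $g'(0) = \rho^0\mathbf S - \beta^0\mathbf R = -\mu < 0$, while $g(\theta)\to+\infty$ as $\theta\to+\infty$ (the term $\rho^0 e^{\theta\mathbf S}$ dominates since $\mathbf S,\rho^0>0$); hence there is a unique adjustment coefficient $R>0$ with $g(R)=1$. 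The process $M_n := e^{-R(q_0+\Sigma_n)}$ is then a non-negative martingale with $M_0 = e^{-Rq_0}$. Letting $\tau := \inf\{n\ge 1 : q_0 + \Sigma_n \le 0\}$ be the ruin time and applying optional stopping at the bounded time $\tau\wedge n$,
\[
e^{-Rq_0} = \E[M_{\tau\wedge n}] \ge \E[M_\tau\,\mathds{1}_{\tau\le n}] \ge \PP(\tau \le n),
\]
where the last inequality uses $M_\tau = e^{-R(q_0+\Sigma_\tau)}\ge 1$ on $\{\tau\le n\}$ because $q_0+\Sigma_\tau\le 0$ there. Letting $n\to\infty$ yields the Lundberg bound $\PP(\tau<\infty)\le e^{-Rq_0}<1$, so
\[
\PP_{q_0}(\forall t\ge 0,\ \overline{\mathfrak{C}}(t)>0) = \PP(\tau=\infty) \ge 1-e^{-Rq_0} > 0 .
\]

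The only genuinely delicate points are the existence of the strictly positive adjustment coefficient $R$ and the passage to the limit in the optional-stopping identity. The former is where the hypothesis $\beta^0\mathbf R - \rho^0\mathbf S>0$ (through $g'(0)<0$) is essential; it is precisely the condition guaranteeing that the convex curve $g$ dips below $1$ before being pulled back up by the exponential term $\rho^0 e^{\theta\mathbf S}$. The latter is harmless because $M_{\tau\wedge n}\ge 0$, so only the one-sided inequality above (via a monotone passage on $\{\tau\le n\}$) is needed, and the boundedness of the increments removes any integrability concern. I expect the main obstacle, if any, to be purely bookkeeping: matching the discrete ruin time $\tau$ of the embedded walk with the continuous-time event $\{\overline{\mathfrak C}(t)>0\ \forall t\}$, which is immediate once one notes that the process is constant between jumps and that $N(t)\to+\infty$.
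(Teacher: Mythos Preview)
Your argument is correct. The first assertion is proved exactly as in the paper, via the strong law of large numbers applied to the embedded i.i.d.\ increments and the almost-sure divergence $N(t)\to+\infty$. For the second assertion, however, you take a genuinely different route. The paper argues softly: from $\overline{\mathfrak{C}}(t)\to+\infty$ it deduces that $\min_t \overline{\mathfrak{C}}(t)$ is almost surely finite, so some level $\overline{M}$ has $\PP_{q_0}(\min_t \overline{\mathfrak{C}}(t)>\overline{M})\ge 1-\eta$; then, using that $\overline{\mathfrak{C}}$ has stationary independent increments, it translates the starting point to $q_0+|\overline{M}|$, and finally glues this to a finite initial stretch reaching $q_0+|\overline{M}|$ without hitting zero via the Markov property. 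Your Lundberg/exponential-martingale approach is sharper: by locating the adjustment coefficient $R>0$ and applying optional stopping to $e^{-R(q_0+\Sigma_n)}$ you obtain the explicit quantitative bound $\PP_{q_0}(\forall t,\ \overline{\mathfrak{C}}(t)>0)\ge 1-e^{-Rq_0}$, which the paper does not provide. The price is a mild structural requirement (finiteness of the moment generating function and a genuine negative jump, i.e.\ $\rho^0>0$), which you handle cleanly; the paper's argument, by contrast, would apply verbatim to any L\'evy-type process with positive drift without needing an adjustment coefficient. Note incidentally that in this model $\beta^0+\rho^0=1$, so the ``$0$'' increment actually has probability zero; this does not affect your computation.
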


We also have the following proposition about the concentration of wealth.

\begin{prop*}
For all $q_0 \in \R_+$

We denote :\[
\mathfrak{m} := \frac{v}{2}\left(\beta^0 \mathbf{R} - \rho^0\mathbf{S}\right), \qquad \sigma^2 := \frac{v}{2}\left(\beta^0 \mathbf{R}^2 + \rho^0 \mathbf{S}^2\right).
\]

Then we have : $\forall \eta>0,t\geq0$

\[\mathbb{P}_{q_0}\left( \overline{\mathfrak{C}}(t)\in \left[ q_{0}+\mathfrak{m}t-\eta \sqrt{\sigma ^{2}t},q_{0}+\mathfrak{m}t+\eta \sqrt{\sigma
^{2}t}\right] \right) \geq 1-\eta ^{-2}.\]

Moreover for all $\tau$ such that:

\[\tau <q_{0}-\frac{\eta ^{2}\sigma ^{2}}{4\mathfrak{m}},\]

then 

\[\mathbb{P}(\overline{\mathfrak{C}}(t)>\tau )\geq 1-\eta ^{-2}.\]

\end{prop*}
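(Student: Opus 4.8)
The plan is to recognize $\overline{\mathfrak{C}}$ as a (shifted) compound Poisson process and then apply Chebyshev's inequality. Since the increments $\overline{U}_\mathfrak{C}(n)$ are i.i.d.\ and are added at the jump times $(t_n^\mathfrak{C})_n$ of a Poisson process $\mathcal{N}^\mathfrak{C}$ of intensity $\frac{v}{2}$, and since (unlike $\mathfrak{C}$) the linearized process carries no absorbing indicator, I would write
\[
\overline{\mathfrak{C}}(t) = q_0 + \sum_{k=1}^{\mathcal{N}^\mathfrak{C}_t} \overline{U}_\mathfrak{C}(k),
\]
where $\mathcal{N}^\mathfrak{C}_t$ is independent of $(\overline{U}_\mathfrak{C}(k))_k$ and follows a Poisson law of parameter $\frac{v}{2}t$. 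First I would compute the first two moments of a single increment, namely $\E[\overline{U}_\mathfrak{C}] = \beta^0\mathbf{R} - \rho^0\mathbf{S}$ and $\E[\overline{U}_\mathfrak{C}^2] = \beta^0\mathbf{R}^2 + \rho^0\mathbf{S}^2$.

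Next, using the standard compound Poisson identities (mean $\mu\,\E[Y]$ and variance $\mu\,\E[Y^2]$ for $\sum_{k=1}^N Y_k$ with $N\sim\mathrm{Poisson}(\mu)$, the latter following from the law of total variance) with $\mu=\frac{v}{2}t$, I obtain
\[
\E_{q_0}[\overline{\mathfrak{C}}(t)] = q_0 + \tfrac{v}{2}t\,(\beta^0\mathbf{R} - \rho^0\mathbf{S}) = q_0 + \mathfrak{m}t, \qquad \V_{q_0}[\overline{\mathfrak{C}}(t)] = \tfrac{v}{2}t\,(\beta^0\mathbf{R}^2 + \rho^0\mathbf{S}^2) = \sigma^2 t.
\]
The first inequality is then immediate: $\overline{\mathfrak{C}}(t)$ has mean $q_0+\mathfrak{m}t$ and standard deviation $\sqrt{\sigma^2 t}$, so Chebyshev applied with the window of half-width $\eta\sqrt{\sigma^2 t}$ gives
\[
\PP_{q_0}\!\left(\big|\overline{\mathfrak{C}}(t) - (q_0+\mathfrak{m}t)\big| \geq \eta\sqrt{\sigma^2 t}\right) \leq \eta^{-2},
\]
which is exactly the stated containment after complementation.

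For the second inequality I would set $g(t) := q_0 + \mathfrak{m}t - \eta\sqrt{\sigma^2 t}$, the left endpoint of that interval. On the event of the first part one has $\overline{\mathfrak{C}}(t) \geq g(t)$, so it suffices to verify $g(t) > \tau$. Substituting $u = \sqrt{t}$ turns $g$ into the parabola $q_0 + \mathfrak{m}u^2 - \eta\sqrt{\sigma^2}\,u$, whose minimum over $u \geq 0$ is attained at $u^\star = \eta\sqrt{\sigma^2}/(2\mathfrak{m})$ (this uses $\mathfrak{m}>0$, which holds by the hypothesis $\beta^0\mathbf{R} - \rho^0\mathbf{S}>0$ of Theorem~\ref{Intro Thm Appli MF}) and equals $q_0 - \frac{\eta^2\sigma^2}{4\mathfrak{m}}$. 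Hence whenever $\tau < q_0 - \frac{\eta^2\sigma^2}{4\mathfrak{m}}$ we get $g(t) > \tau$ for every $t\geq 0$, so $\{\overline{\mathfrak{C}}(t)\geq g(t)\}\subseteq\{\overline{\mathfrak{C}}(t)>\tau\}$, which yields $\PP_{q_0}(\overline{\mathfrak{C}}(t)>\tau)\geq 1-\eta^{-2}$.

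There is essentially no serious obstacle here, since the linearization removes the absorbing boundary that makes $\mathfrak{C}$ genuinely nonlinear. The only points requiring care are justifying the compound Poisson variance formula (a one-line conditioning on $\mathcal{N}^\mathfrak{C}_t$) and the elementary minimization over $t$; in particular one should note that the critical point is a \emph{global} minimum on $[0,\infty)$ precisely because $\mathfrak{m}>0$ forces $g$ to be eventually increasing, so the bound $\tau < q_0 - \frac{\eta^2\sigma^2}{4\mathfrak{m}}$ is the sharp threshold ensuring $g(t)>\tau$ uniformly in $t$.
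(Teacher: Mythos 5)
Your proof is correct, and it reaches the same two pillars as the paper's proof --- the identities $\E_{q_0}[\overline{\mathfrak{C}}(t)] = q_0 + \mathfrak{m}t$ and $\V_{q_0}[\overline{\mathfrak{C}}(t)] = \sigma^2 t$, followed by Chebyshev and the minimization of $t \mapsto q_0 + \mathfrak{m}t - \eta\sqrt{\sigma^2 t}$ --- but you derive the moments by a genuinely different, more probabilistic route. The paper works analytically: it notes that $\overline{\mathfrak{C}}(t)$ lives on the lattice $q_0 + \mathbf{R}\Z + \mathbf{S}\Z$ and differentiates the forward (evolution) equations to obtain ODEs for $\E(\overline{\mathfrak{C}}(t))$ and $\E(\overline{\mathfrak{C}}(t)^2)$, which it then integrates to get the variance. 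You instead observe that, absent the absorbing indicator, $\overline{\mathfrak{C}}$ is exactly a drifted compound Poisson process and read off the moments from the identities $\E\bigl[\sum_{k \le N} Y_k\bigr] = \mu\,\E[Y]$ and $\V\bigl[\sum_{k \le N} Y_k\bigr] = \mu\,\E[Y^2]$ via the law of total variance. This buys two things: it avoids term-by-term differentiation of an infinite sum (which the paper performs without justification), and it keeps the rate bookkeeping transparent --- the paper's own computation is inconsistent about factors of $v$ versus $v/2$ in the variance and in its choice of $\varepsilon_t^\eta$, whereas your derivation matches the statement's normalization throughout. You also make explicit a point the paper leaves implicit: that $\mathfrak{m}>0$ (i.e.\ $\beta^0\mathbf{R} - \rho^0\mathbf{S}>0$) is needed for the critical point of the parabola in $\sqrt{t}$ to be a global minimum, which is what makes the threshold $q_0 - \eta^2\sigma^2/(4\mathfrak{m})$ meaningful. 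The endgame --- restricting to the event of the first part and checking that the lower envelope stays above $\tau$ --- is identical in both proofs.
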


This article is in two parts. In the first one, we introduce the spatial model with his probabilistic framework. We also state and give the sketch of the proofs of the two qualitative theorems. In the second part, we give some reminders on Continuous Time Markov processes (in particular the theory of infinitesimal generators). We do that in order to describe and justify the Mean Field model. Then we state Mean field theorem. The proofs of the theorems are in the appendix.

\section{Spatial Model}
\begin{remarque}
	We call particle, player or individual, the agent defined in the following.
\end{remarque}
\subsection{Model}\label{Spatial Model}
Let us give a probabilistic framework for the Demographic prisoner's dilemma of \cite{epstein1998zones}. Let $(\Omega,\mathcal{T},\PP)$ be a probability space.

	\begin{enumerate}
		\setlength\itemsep{0.2em}
		\item Let  $m \in \N^*$. Let $K \in \N^*$.
		\item For a particle $i \in \lbrace 1,\dots, K \rbrace$, let us denote $X^i(t)$ its position at time $t \in \R_+$. Then $\forall i\in \lbrace 1,\dots,K \rbrace$ $X^i$ is a continuous simple symmetric random walk on $(\Z / m \Z)^2$ of rate $d\in \R_+$.
		\item We denote the wealth of particle $i \in \lbrace 1,\dots,K \rbrace$ at time $t\in \R_+$ by $Y^i(t) \in \R$.
		\item The action played by $i\in \lbrace 1,\dots,K \rbrace$ is coded in a process $Z^i$. Particle $i$ plays \textit{Cooperate} if $Z^i = 0$ and plays \textit{Defect} if $Z^i = 1$. If $Z^i = -1$ the particle does not play.
		\item   The wealth changes through games. 
		\begin{itemize}
			\setlength\itemsep{0.2em}
			\item To make the games happen, each couple of individuals $(i,j)$ is given a Poisson process independent of everything of parameter $v$. When this Poisson process realizes (for example at a time $t$), if the individuals are on the same site (\emph{i.e.} if $X^i(t)=X^j(t)$), if their wealths are positive (\emph{i.e.} if $Y^i(t)>0$ and $Y^j(t)>0$) and if $Z^i,Z^j$ are both not equal to -1 the individuals play together. Otherwise nothing happens. Then if an individual has a negative wealth, he can't play with the other individuals.
			\item A point to notice is that two players cannot lose wealth simultaneously. As a consequence, two individuals cannot kill each other.
			\item At the end of each game (between for example player $i$ and $j$) $Y^i$ and $Y^j$ are updated, adding the payoff of the game played by $i$ and $j$.
		\end{itemize} 
			\item For $i\in \lbrace 1,\dots,K \rbrace$ when $Y^i>w_c$, and when the birth Poisson process of $i$ realizes, he gives birth to an offspring. The index $j$ of the offspring is drawn uniformly randomly from $\lbrace j \in \lbrace 1,\dots,K \rbrace, Z^j = -1 \rbrace$, and then $X^j$, $Y^j$, $Z^j$ are updated doing $X^j \leftarrow X^i$, $Y^j \leftarrow w_0$ and $Z^j \leftarrow Z^i$.
	\end{enumerate}
	
	We have that $(X(t),Y(t),Z(t))_t$ is a Continuous Time Markov Chain, we denote by $(\mathcal{F}^d_t)_t$ its filtration such that $(X(t),Y(t),Z(t))_t$ is adapted to $(\mathcal{F}^d_t)_t$.\\

\begin{definition}
We call set of configuration $\left((\Z / m \Z)^2 \times \R \times \lbrace -1,0,1 \rbrace\right)^{K}:=E $. A configuration is an element usually denoted $\sigma$ of $\left((\Z / m \Z)^2 \times \R \times \lbrace -1,0,1 \rbrace\right)^{K} $.
We denote the non trivial configurations by:
\begin{itemize}
	\item $\mathcal{C}$ the set of configurations with at least 2 cooperators and 1 defector with positive wealths.
	\item $\mathcal{C}'$ the set of configurations with at least 1 cooperator and 1 defector with positive wealths.
\end{itemize}
A particle system is a Continuous Time Markov Chain with state space: the space of configuration. It is usually denoted $(X(t),Y(t),Z(t))_t:= (\sigma_t)_t$.
\end{definition}

\vspace{20pt}
Before stating the theorems, let us remind the data of the model.
 \begin{itemize}
	\item $m\in \N$ the size of the torus, state space of the position.
	\item $K \in \N^*$ is the maximum number of players on the torus.
	\item 	We fix the payoff initially $\mathbf{T}>\mathbf{R}>0>$ and $\mathbf{S}>\mathbf{P}>0$. The payoff matrix is 
				\[\left(\begin{array}{cc}
				(\mathbf{R},\mathbf{R}) & (-\mathbf{S},\mathbf{T}) \\ (\mathbf{T},-\mathbf{S}) & {(P_1,P_2) }  
				\end{array}\right),
				\]
				where $(P_1,P_2)$ is a random variable with distribution $\frac{1}{2}\delta_{(-2\mathbf{P},0)} + \frac{1}{2}  \delta_{(0,-2\mathbf{P})}$
		
	\item The position is denoted by $X$, the wealth by $Y$ and the strategy by $Z$. If $Z=0$ the particle is a cooperator, if $Z=1$ the particle is defector. If $Z = -1$ the particle cannot play (because she is not born yet).
	\item If $Y\leq 0$, the particle stop playing. If $Y>w_c>0$ the particle can give birth to a particle with initial wealth $w_0 < w_c$.
	\item $d\in \R_+$ is the rate of the Poisson process making a particle move. $v \in \R_+$ is rate of the Poisson process making a couple of particles plays together. $b \in \R_+$ is the rate of the Poisson process making the player trying to have an offspring.
\end{itemize}

\subsection{Almost sure extinction}
In this subsection we will show a first qualitative theoretic result: when the sucker payoff $\mathbf{S}$ is far greater than the reward payoff $\mathbf{R}$ then almost surely for any initial configuration (with at least one defector) cooperators will die.

\begin{thm}\label{thm Extinction presque sure}

	There exists a constant $\mu >0$ depending only on $v,d,K,b,w_0,w_c$ and $m$ such that if: 

\begin{eqnarray}
	\mu \mathbf{R} < \mathbf{S}
\end{eqnarray}
	then for each initial configuration with at least one defector $\sigma$: \\
		\[\PP_\sigma(\lbrace\text{Eventually all the cooperators will be dead}\rbrace) = 1 .\]
	
\end{thm}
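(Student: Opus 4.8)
The plan is to show that the sub-population of cooperators is dominated by a \emph{subcritical} branching mechanism, so that it dies out almost surely. First I would record a structural fact that drives the whole argument: the defectors never vanish. Indeed, a defector only ever loses wealth through the punishment $-2\mathbf{P}$, which can occur solely in a defector--defector game; if a single defector remains it can no longer be punished, so its wealth is nondecreasing and it survives forever. Since deaths occur one at a time, the number of living defectors can never drop from $1$ to $0$, hence it stays $\geq 1$ for all times under any $\PP_\sigma$ with $\sigma$ carrying a defector of positive wealth. Thus every living cooperator is permanently exposed to at least one defector.

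Next I would prove a uniform \textbf{drift lemma}. Fix a window length $T>0$ and tag a cooperator $i$. Over $[s,s+T]$ the expected number of cooperator--cooperator games played by $i$ is at most $(K-1)vT$, since $i$ has at most $K-1$ partners, each pair carrying an independent game clock of rate $v$, and a game requires co-location; hence the expected wealth $i$ gains from such games is at most $(K-1)vT\,\mathbf{R}$. On the other hand, because the continuous-time simple random walks of $i$ and of a living defector are recurrent on the finite torus $(\Z/m\Z)^2$, for $T$ large enough there is a constant $\theta_0=\theta_0(v,d,m)>0$ such that, from \emph{any} configuration with $i$ and some defector alive, $i$ plays at least one cooperator--defector game during $[s,s+T]$ with probability $\geq\theta_0$, each such game costing $i$ the payoff $\mathbf{S}$. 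Consequently, writing $\tau_i$ for the death time of $i$,
\[
\E\!\left[\,Y^i\big((s+T)\wedge\tau_i\big)-Y^i(s)\ \middle|\ \mathcal{F}^d_s\right]\ \leq\ (K-1)vT\,\mathbf{R}-\theta_0\,\mathbf{S}.
\]
Setting $\mu:=2(K-1)vT/\theta_0$, the hypothesis $\mu\mathbf{R}<\mathbf{S}$ forces this conditional increment to be $\leq-\tfrac12\theta_0\mathbf{S}<0$, uniformly in the configuration. So the tagged wealth, sampled every $T$ units of time and stopped at death, is a supermartingale with a strictly negative drift bounded away from $0$; as its increments have exponential tails (the number of games per window is Poisson), it tends to $-\infty$, and therefore $\tau_i<\infty$ $\PP_\sigma$-almost surely.

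It remains to control reproduction. A newborn cooperator enters with wealth $w_0<w_c$, and to give birth it must first raise its wealth to $w_c$. Comparing $Y^i$ with the negative-drift walk of the drift lemma and using a Cramér-type exponential supermartingale of the form $e^{-\lambda Y^i}$, the probability that $i$ ever reaches $w_c$ before hitting $0$ is at most $e^{-\lambda(w_c-w_0)}$, and more generally the \emph{expected number of offspring} produced by $i$ in its lifetime can be made $<1$ by taking $\mu$ (hence $\lambda$) large, i.e. by enlarging the constant $\mu$ of the statement to absorb the dependence on $b,w_0,w_c$. Therefore the genealogical tree of cooperators issued from $\sigma$ is stochastically dominated by a subcritical Galton--Watson process, which becomes extinct almost surely; equivalently, after finitely many births no cooperator is ever born again, and each of the finitely many cooperators that ever live dies in finite time by the previous paragraph. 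This yields $\PP_\sigma(\{\text{eventually all cooperators are dead}\})=1$.

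The delicate point is the drift lemma, and specifically the uniform lower bound $\theta_0$ on the frequency of cooperator--defector encounters. It must hold against adversarial configurations in which many cooperators clump on a defector-free site, making cooperator--cooperator games momentarily dominant, and it must accommodate the fact that the defector nearest to $i$ may itself die during the window; one handles this by invoking the persistence of \emph{some} defector together with a meeting/equidistribution estimate for independent random walks on the torus that is uniform over starting positions. The second technical nuisance is the truncation of the wealth at the killing boundary $0$ and the interruptions caused by births, which I would absorb by working throughout with the stopped process $Y^i(\,\cdot\wedge\tau_i)$ and the exponential supermartingale.
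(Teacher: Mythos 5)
Your first two steps are sound and essentially parallel the paper's own argument: the persistence of defectors (a lone defector can never be punished, and two defectors can never die simultaneously) and the uniform-in-configuration estimate on the time of the next cooperator--defector encounter (the paper proves exactly this as Lemma~\ref{Lemme Majoration uniforme Extinction}, a geometric tail bound obtained from the finiteness of the set of photographs). The genuine gap is in your last step, the control of reproduction by a subcritical Galton--Watson process, and it cannot be repaired as stated. The hypothesis $\mu\mathbf{R}<\mathbf{S}$ is invariant under multiplying both payoffs by a common factor, whereas the birth mechanism is tied to the fixed thresholds $w_0<w_c$. Take $\mathbf{S}=2\mu\mathbf{R}$ and let $\mathbf{R}\to+\infty$: a cooperator born at $w_0$ is lifted above $w_c$ by a \emph{single} cooperator--cooperator win, an event whose probability (e.g.\ the first game involving it being against another cooperator, in a configuration where cooperators are clumped and the defector is far away) is bounded below uniformly in $\mu$; once above $w_c$ it gives birth at rate $b$ until a cooperator--defector game drags it down, so its expected offspring number is at least of order $b$ times the expected waiting time for such a game --- a quantity depending on $v,d,K,m$ but \emph{not} on $\mu$. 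Concretely, your Cram\'er exponent cannot be ``made large by taking $\mu$ large'': writing $p$, $q$ for the per-step probabilities of a cooperator--cooperator, resp.\ cooperator--defector, game involving the tagged particle, $\lambda^*$ solves $p\left(e^{\lambda\mathbf{R}}-1\right)=q\left(1-e^{-\lambda\mathbf{S}}\right)$, hence $e^{\lambda^*\mathbf{R}}\leq 1+q/p$ and $\lambda^*(w_c-w_0)\leq (w_c-w_0)\log(1+q/p)/\mathbf{R}\to 0$, so your bound $e^{-\lambda^*(w_c-w_0)}$ tends to $1$, not $0$. No choice of $\mu$ depending only on $v,d,K,b,w_0,w_c,m$ makes the dominating branching process subcritical, so extinction cannot be concluded along this route.

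The paper's proof bypasses individual genealogies entirely by exploiting the one fact your decomposition discards: \emph{births do not create cooperator wealth}, since the parent transfers the amount $w_0$ to its child. It therefore tracks the total cooperator wealth $\mathfrak{C}^{tot}_n=\sum_{i}\mathds{1}_{Z^i=0}Y^i_n$, samples it at the successive cooperator--defector game times $(\tau_k)_k$, and uses the uniform geometric lemma to show that each sampled increment has expectation at most $\mu\mathbf{R}-\mathbf{S}<0$ on the event that some cooperator is still alive. Since $\mathfrak{C}^{tot}$ is bounded below by $-K\mathbf{S}$, summing these inequalities forces $\sum_k\PP_\sigma\left(\exists\, i\in N_\mathfrak{C}(\tau_k),\ \mathfrak{C}^i_{\tau_k}>0\right)<+\infty$, and Borel--Cantelli gives almost sure extinction. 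Your drift lemma is the right kind of estimate, but it must be applied to $\mathfrak{C}^{tot}$ --- for which births are neutral and every cooperator--defector game costs the full $\mathbf{S}$ --- rather than to a tagged cooperator supplemented by a branching comparison.
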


\begin{idée}
The aim of the proof is showing that the total wealth of cooperators goes to 0 almost surely.
The sketch of the proof is the following: 
\begin{enumerate}
\item The only way the total cooperator wealth decreases is via games with defectors (giving birth does not change the total wealth of cooperators). Since $(\Z / m \Z)^2$ is finite, going from a configuration $\sigma_0$, the first game between a defector and a cooperator arrives at a finite random time $\tau$.  At this time the variation of total wealth is less than $ -\mathbf{S} + \mathbf{R}\mathcal{N}_{\tau}$ where $\mathcal{N}$ is the Poisson process counting the games.
\item  Then to apply a Law of Large Number argument we want a condition such that:\\
 $ \E\left(-\mathbf{S} + \mathbf{R}\mathcal{N}_{\tau}\right) < 0$. 
\item To find this condition we have to upper bound uniformly in the configurations the first time for a defector to play with a cooperator. This will be one of the main difficulty of the proof.
\begin{enumerate}
\item We remark that there exists a minimum number $\mathfrak{m}$ of realizations of Poisson processes such that for each non trivial configuration there is a Cooperator-Defector game in less than $\mathfrak{m}$ realizations.
\item Since the evolution is managed by Poisson processes, for every configuration $\sigma$ there is a Cooperator-Defector game in less than $\mathfrak{m}$ realizations of Poisson processes with probability $\varepsilon(\sigma) >0$.
\item Since $\varepsilon(\sigma)$ is the probability of a particle moving and a game happening, $\varepsilon$ does not depend on the wealth of the individuals. Since if we don't look at the wealth of the individuals there is a finite number of configurations, we take $\varepsilon$ the minimum over these configurations. This part is really detailed in the proof.
\item The probability that a Cooperator-Defector game happens after $k \mathfrak{m}$ realizations of Poisson processes is bugger than $(1 - \varepsilon)^k$.
\end{enumerate}
\item We finish the proof using Borel Cantelli's Lemma.
\end{enumerate}
\end{idée}

\subsection{Coexistence \emph{ad vitam eternam}}
In this subsection we will show a second qualitative result that is: when the reward is far greater than the sucker payoff and the initial offspring wealth then with positive probability the cooperators live until the end of times.

\begin{thm}\label{thm Coexistence ad vitam eternam Spatial}
There exists a constant $\nu,\nu' > 0$ depending only on $d, v$,$w_c$,$w_0$, $K$ and $m$ such that if: 

\begin{eqnarray}\label{Hyp thm Coop ad vitam eternam}
\nu (\mathbf{S}+w_0) < \mathbf{R} \\
\nu' (2\mathbf{P} + w_0) < \mathbf{T}
\end{eqnarray}

then for each configuration $\sigma$ with at least two cooperators and one defector: \\

\[
\PP_\sigma(\lbrace \text{the cooperators and defectors of $\sigma$ never die} \rbrace) > 0 
\]
\end{thm}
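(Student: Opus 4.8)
The plan is to exhibit a single event of positive probability on which none of the initial particles ever dies, by comparing the wealth of each initial cooperator and each initial defector to a discrete-time adapted process with uniformly positive drift and bounded increments, and then using an exponential supermartingale to bound the ruin probability strictly below $1$. As in the proof of Theorem~\ref{thm Extinction presque sure}, I would work with the embedded chain indexed by realizations of the Poisson processes, grouped into \emph{rounds} of a fixed length $\mathfrak{m}$.

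\textbf{A favorable local maneuver.} The first key step is a ``good round''. I claim there exist $\mathfrak{m}\in\N$ and $\varepsilon>0$, depending only on $d,v,K,m,w_c,w_0$, such that from \emph{any} configuration in which the initial particles are still alive, the conditional probability (given the past) that within the next $\mathfrak{m}$ realizations every initial cooperator plays at least one cooperator--cooperator game (gaining $\mathbf{R}$) and every initial defector plays at least one game against a cooperator (gaining $\mathbf{T}$) is at least $\varepsilon$. The existence of such a maneuver uses that there are at least two live cooperators (so a cooperator always has a partner from which to draw reward) and at least one defector, that the torus is finite, and crucially that whether such a pattern of moves and games occurs does not depend on the wealths; hence, ignoring wealths, there remain only finitely many relevant configurations, and $\varepsilon$ may be taken as the minimum of the positive pattern probabilities over them. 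This finiteness-up-to-wealth reduction is the main obstacle; it is the same device as in Theorem~\ref{thm Extinction presque sure}, here complicated by the need to orchestrate a simultaneous gain for all (at most $K$) initial particles and by the particles born during the evolution.

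\textbf{Per-round drift.} Fix an initial cooperator $i$ and let $\Delta^i_n$ be the change of $Y^i$ over round $n$. Since a round contains at most $\mathfrak{m}$ games and a cooperator can only lose $\mathbf{S}$ (against a defector) or $w_0$ (at a birth), one always has $\Delta^i_n \ge -\mathfrak{m}(\mathbf{S}+w_0) =: -C$, irrespective of how many other particles are present, and on a good round $\Delta^i_n \ge \mathbf{R}-C$. Consequently $\E[\Delta^i_n \mid \mathcal{F}^d_{\text{round }n}] \ge \varepsilon\mathbf{R}-C = \varepsilon\mathbf{R}-\mathfrak{m}(\mathbf{S}+w_0)$, which is a uniformly positive number $\mu>0$ as soon as $\mathbf{R} > \tfrac{\mathfrak{m}}{\varepsilon}(\mathbf{S}+w_0)$; this fixes $\nu := \mathfrak{m}/\varepsilon$. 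The identical computation for an initial defector, whose per-game losses are bounded by $2\mathbf{P}$ or $w_0$, gives $\E[\Delta^j_n \mid \mathcal{F}^d] \ge \varepsilon\mathbf{T}-\mathfrak{m}(2\mathbf{P}+w_0)>0$ under the second hypothesis with $\nu' := \mathfrak{m}/\varepsilon$. Thus each initial particle's round-sampled wealth is adapted, with increments bounded in absolute value by some $C''$ and conditional mean at least $\mu>0$.

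\textbf{Ruin estimate and conclusion.} For such a process there is $\theta>0$ (uniform, since the increments are bounded and their conditional mean is bounded below) with $\E[e^{-\theta\Delta}\mid\mathcal{F}^d]\le 1$, so the round-sampled $e^{-\theta Y^i}$ is a supermartingale; optional stopping at $\tau=\inf\{n: Y^i_n\le 0\}$ gives $\PP(\tau<\infty)\le e^{-\theta Y^i(0)}<1$. To upgrade this to \emph{simultaneous} survival of all initial particles and to continuous time, I would prepend a \emph{boost phase}: a finite, positive-probability pattern of realizations after which no initial particle has died and each has wealth at least $A$, with $A$ chosen so large that $K\,e^{-\theta(A-C)}<1$. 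From the resulting high-wealth configuration the shifted supermartingale bounds, by $e^{-\theta(A-C)}$, the probability that a fixed initial particle ever reaches $0$ (including within a round, since the within-round dip is at most $C$); a union bound over the at most $K$ initial particles keeps the total below $1$, uniformly over such starting configurations. Combining the two phases by the Markov property yields $\PP_\sigma(\text{no initial particle ever dies}) \ge \PP_\sigma(\text{boost})\cdot\bigl(1-K\,e^{-\theta(A-C)}\bigr) > 0$, which is the claim. The delicate points are the uniform lower bound $\varepsilon$ of the second paragraph and checking that births and extra particles never worsen the per-round loss beyond $\mathfrak{m}$ bounded decrements.
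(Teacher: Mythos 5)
Your overall architecture --- an embedded chain grouped into rounds, a uniform ``favorable event'' probability, per-round positive drift, an exponential supermartingale ruin estimate, then a boost phase combined with a union bound and the Markov property --- is a genuinely different route from the paper's proof, which never works with the True system directly: the paper introduces a \textbf{Ghost system} in which individuals keep playing even with negative wealth, samples each cooperator's wealth at its cooperator--cooperator game times, obtains almost sure linear divergence from a Doob decomposition, a fourth-moment Burkholder--Davis--Gundy bound and Borel--Cantelli (Corollary \ref{Corol Divergence a.s.}), and only at the very end couples the Ghost system to the True one on the event that all wealths stay positive. Your ruin-probability scheme would be a viable (and more quantitative) alternative \emph{if} your ``good round'' lemma were true as stated. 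It is not, and that is the genuine gap.

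The failure point is exactly the one the Ghost system is built to remove. In the True system a game takes place only when both participants currently have positive wealth, so it is false that ``whether such a pattern of moves and games occurs does not depend on the wealths''. Your simultaneous good round requires every initial defector to play some live cooperator, and each such game costs its victim $\mathbf{S}$; since $\mathfrak{m}$ and $\varepsilon$ must depend only on $d,v,K,m,w_0,w_c$ and not on the payoffs, nothing prevents $\mathbf{S}$ from exceeding anything a cooperator can earn within one round. Concretely (possible as soon as $K\geq 5$): take two cooperators of wealth $1$, three defectors, and $\mathbf{S}>1+\mathfrak{m}\mathbf{R}$; then any defector--cooperator game kills its victim, so at most two such games can occur in a round, and your simultaneous event has probability $0$ --- no uniform $\varepsilon>0$ exists, and the ``minimum over finitely many wealth-ignoring configurations'' is a minimum over quantities that do depend on the wealths. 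The lemma can be repaired, but this requires an ingredient your write-up does not contain: either (i) weaken it to a \emph{per-particle} good round (a single orchestrated game, during which nobody dies, genuinely is wealth-independent) and stop all the supermartingales at the first round boundary where some initial particle's wealth drops below $C$ --- note that your drift and union-bound computations only ever use the per-particle statement, so this costs nothing; or (ii) invoke the standing hypothesis $\mathbf{R}>\nu(\mathbf{S}+w_0)$ (with your $\nu=\mathfrak{m}/\varepsilon\geq 1$ this gives $\mathbf{R}>\mathbf{S}+w_0$) and orchestrate the round so that each cooperator replays a replenishing cooperator--cooperator game before each defector visit, so that its wealth never falls below its starting value and every required game actually occurs; or (iii) do as the paper does: prove the uniform bound in the Ghost system, where pattern probabilities depend only on the photograph (Lemma \ref{Lemme Maj unif Coexistence}), and transfer back by coupling. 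Your boost phase, by contrast, is fine: there, positive probability for the given $\sigma$ and the given payoffs suffices, and no uniformity over configurations is needed.
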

\begin{idée}
The main ideas are to consider a \textbf{Ghost system} and then to extend the result of the \textbf{Ghost system} using Burkholder-Davis-Gundy inequalities and Borel-Cantelli Lemma. We call the system introduced in the first section \textbf{True system}.
\begin{enumerate}
\item In the \textbf{Ghost system}, everybody plays even with negative wealth. Also whenever something in the system happens all individual's wealth decrease of $w_0$ (except if an individual gives birth, this particular individual does not lose additional wealth). If in the \textbf{Ghost system} we can prove that every individual always have positive wealth then this \textbf{Ghost system} is actually the \textbf{True system}.
\item Using a Burkholder-Davis-Gundy argument with Borel Cantelli's Lemma, we prove on the \textbf{Ghost system} that the minimum wealth over all individuals $\mathfrak{C}^{min}_t \to +\infty$ a.s. when $t \to +\infty$.
\item We just have to consider an event (of positive probability) such that no individual has non positive wealth until they accumulated enough wealth to make the coupling between the \textbf{Ghost system} and the \textbf{True system}.
\end{enumerate}
\end{idée}
\vspace{5pt}
Theorem \ref{thm Extinction presque sure} and Thm \ref{thm Coexistence ad vitam eternam Spatial} are qualitative results. In order to have quantitative results we consider in the following a Mean Field model. In simulation we notice that birth are really useful for maintaining the cooperation (see for example Run 3 and Run 4 of Eptein \cite{epstein1998zones}), in order to guarantee some result on the survival of cooperators in the Mean Field model we won't consider births.

\section{Mean Field model}
\subsection{Reminders on Markov process and infinitesimal generators}\label{Subsect Reminder on Markov and gene}

Let us firstly give some reminders about Markov processes.\\
One good way to study Markov processes is via their distribution. For a complete separable state space $F$, a process $X=(X(t))_{t\geq0}$ has value in the canonical space.
\begin{definition}[Canonical space]
The canonical space $D(\R_+,F)$ is the space of right continuous functions from $[0,+\infty)$ to $F$ with left limits, endowed with the Skohorod topology associated to its usual metric. With this metric, $D(\R_+,F)$ is complete and separable (see the book of Ethier Kurtz \cite{EK} for more details).\\

We denote by $(\mathcal{T}_t)_t$ the natural filtration associated to $D(\R_+,F)$.
We define the canonical process $X := (X(t))_{t\geq0}$ by:\[
\forall t\geq 0, \qquad	X(t): \omega \in D(\R_+,F) \mapsto \omega(t) \in F
	\]
\end{definition}

To define a Markov process in a discrete countable state space $F$ (which will be the case here), we need an initial measure $\nu$ on $F$ and a matrix called rate matrix $(\mathcal{A}(x,y))_{x,y\in F}$ with $\mathcal{A}(x,x) = - \sum\limits_{\substack{y\in F\\ y\neq x}} \mathcal{A}(x,y)$. For $x\neq y$ $\mathcal{A}(x,y)\geq 0$ gives the rates of transition of the future process from $x$ to $y$. With this matrix we define the generator (which is one of the main tool in the study of Markov processes) of the following Markov process such that: for each $f$ bounded and measurable (for the Borel sets of $F$) from $F$ to $\R$ (this set is denoted $\mathcal{M}_b(F)$), and for all $x\in F$:
\[
\mathcal{A}f(x) = \sum\limits_{y \in F} \mathcal{A}(x,y)[f(y)-f(x)]
\]
The generator is a bounded linear operator on the bounded functions from $F$ to $\R$. We note in the same way a rate matrix $\mathcal{A} = (\mathcal{A}(x,y))_{x,y \in F}$ and the associated Markov generator $\mathcal{A} : \mathcal{M}_b(F) \to \mathcal{M}(F)$ (with $\mathcal{M}(F)$ the space of measurable functions of $F$). 
 We will always denoting the matrix with two entries (for example $\mathcal{A}(.,.)$) and the generator with no entry (for example $\mathcal{A}$).\\
Conversely giving a generator we can construct the rate matrix looking at the rates (terms that will be before the $[f(y) - f(x)]$) in the expression of the generator.

\begin{definition}\label{Construct Probabiliste Process Markov }

Let $\mathcal{A}$ be a rate matrix and an initial measure $\nu$ we can construct a Markov process $X$ as follow:
\begin{enumerate}
\item Let $(Y(n))_n$ be a Markov chain in $F$ with initial distribution $\nu$ and with transition matrix $\left(\frac{\mathcal{A}(x,y)}  {|\mathcal{A}(x,x)|}\right)_{x,y \in F}$. We allow $|\mathcal{A}(x,x)| = 0$ by taking for one $y_0 \in F$ $\frac{\mathcal{A}(x,y_0)}{|\mathcal{A}(x,x)|} = 1$ and for all $y \neq y_0$ $\frac{\mathcal{A}(x,y)}{|\mathcal{A}(x,x)|} = 0$. We put $y_0$ only to have $(Y_n)_n$ well defined, $y_0$ doesn't have any interest to $X$ as we will see in the following.
\item Let $\Delta_0,\Delta_1,\dots, $ be independent and exponentially distributed with parameter 1 (and independent of $Y(.)$) random variables.
\item We define the Markov process $(X(t))_t$ in $F$ with initial distribution $\nu$ and generator $\mathcal{A}$ by:

\begin{eqnarray}
X(t) = \left\lbrace \begin{array}{ll}
Y(0), & 0 \leq t < \frac{\Delta_0}{|\mathcal{A}(Y(0),Y(0))|} \medskip\\
Y(k), & \sum\limits_{j=0}^{k-1}\frac{\Delta_j}{|\mathcal{A}(Y(j),Y(j))|} \leq t < \sum\limits_{j=0}^{k} \frac{\Delta_j}{|\mathcal{A}(Y(j),Y(j))|}
\end{array}\right.
\end{eqnarray}
\end{enumerate}
Note that we allow $\mathcal{A}(x,x) = 0$ taking $\Delta/0 = \infty$.
\end{definition}
\vspace{15 pt}
Since it will be constant in the evolution, we denote $N$ the (initial) number of particles. Let us firstly introduce the generator of the spatial system (but without birth). Let us begin by some notations \\

\textbf{Notations}\\
	Let $(\Omega,\mathcal{T},\PP)$ be a probability space.
	Let $(e_1,\dots,e_N)$ be the canonical basis of $\R^N$ (with $N\in \N^*$).
	Let $(e_1^1,e_1^2,e_2^1,\dots, e_N^1,e_N^2)$ be the canonical basis of $(\R^2)^N$.\\
	We denote $E = \left((\Z/m\Z)^2 \times \R \times \lbrace 0,1 \rbrace\right)^N$. We identify $\left((\Z/m\Z)^2 \times \R\times \lbrace 0,1 \rbrace\right)^N$ to $((\Z/m\Z)^2)^N \times \R^N \times \lbrace 0,1 \rbrace^N$, then we can write $(x,y,z)\in\left((\Z/m\Z)^2 \times \R \times \lbrace 0,1 \rbrace\right)^N$ with $x\in ((\Z/m\Z)^2)^N,  y\in \R^N,z\in \lbrace0,1\rbrace^N$.\\
    Let us denote ${C}_b(E)$ the set of continuous bounded functions from $E$ to $\R$. \\
    We denote by $\mathfrak{P}_N = \lbrace (i,j) \in \lbrace 1,\dots,N\rbrace / i \neq j \rbrace$ all the individuals couples which can play together.\\

The infinitesimal generator $\mathcal{A}$ with domain $C_b(E)$ of $(X,Y,Z)$ is: for all $f\in C_b(E)$ and for all $(x,y,z) \in E$
\begin{eqnarray}\label{Gene System Spatial}
\mathcal{A} f(x,y,z) = \mathcal{A}_d f(x,y,z) + v\,\mathcal{A}_g f(x,y,z).
\end{eqnarray}

The part $\mathcal{A}_d$ is the generator representing the motion of the individuals. The individuals move following independent random walks, then $\mathcal{A}_d$ is defined by: for all 
 $f\in C_b(E)$: 
\begin{eqnarray}\label{Gene Deplacement}
\mathcal{A}_d f(x,y,z) = \sum\limits_{i=1}^{N} \sum\limits_{c=1}^2 \sum\limits_{\epsilon = \pm 1} \frac{d}{2 \times 2} [f(x + \epsilon e_i^c,y,z) - f(x,y,z)].
\end{eqnarray}
The generator $\mathcal{A}_g$ is the generator representing the evolution of the wealth of individuals through games. Let us give firstly $\mathcal{A}_g$ then the explication of how it works. We have for each function $f\in C_b(E)$: 
\begin{eqnarray}\label{Gene Jeux spatial}
\mathcal{A}_g f(x,y,z) = \hspace{-13pt} \sum\limits_{\substack{(i,j) \in \mathfrak{P}_N}} \frac{1}{2} \mathds{1}_{x_i = x_j} \mathds{1}_{\substack{y_i > 0\\y_j >0}} \left[\begin{array}{l}\ \mathds{1}_{z_i = 0, z_j =0} (f(x,y + \mathbf{R} e_i + \mathbf{R} e_j,z) - f(x,y,z)) + \\
\mathds{1}_{z_i = 1,z_j = 0} (f(x,y + \mathbf{T} e_i - \mathbf{S} e_j,z) - f(x,y,z)) +  \qquad\smallskip \\
\mathds{1}_{z_j = 1,z_i = 0} (f(x,y + \mathbf{T} e_j - \mathbf{S} e_i,z) - f(x,y,z)) +  \qquad\smallskip \\
\ \mathds{1}_{z_i = 1, z_j =1} \left(\begin{array}{l}\ \frac{1}{2}(f(x,y -2\mathbf{P} e_j,z) - f(x,y,z))  \\
+ \frac{1}{2}(f(x,y -2\mathbf{P} e_i,z) - f(x,y,z))\end{array}\right) \ 

\end{array} \right]
\end{eqnarray}
Let us explain a bit this generator:
\begin{itemize}
	\item $\mathds{1}_{x_i = x_j}$ represents the spatial structure meaning that a game makes the wealth change only if the two individuals are on the same site.
	\item $\mathds{1}_{\substack{y_i > 0 \\ y_j >0}}$ checks if both individuals have a positive wealth (\emph{i.e.} there wealth are positive). To have the generator of the ghost system, the only change is: replacing this indicator function by 1.
	\item the big bracket represents the change of wealth through the game of the two individuals. If $z=0$ the individual is a cooperator, if $z=1$ the individual is a defector. The term $\mathds{1}_{z_i=1,z_j=0}$ looks at the strategies of the players and choose the right payoff to give to the individuals. For example if individual 1 and 2 are playing (both have a positive wealth and on the same site) if individual 1 cooperates and 2 defects then individual 1 gets a payoff of $-\mathbf{S}$ and individual 2 gets $\mathbf{T}$, the term in the generator representing this type of transition is: 
	\[
	\mathds{1}_{z_1=0,z_2=1} [f(x,y -\mathbf{S}e_1 + \mathbf{T}e_2,z) - f(x,y,z)].
	\]
\end{itemize}
\begin{Not}
The distribution of the previous $(X,Y,Z)$ defined by the generator (\ref{Gene System Spatial}) is: $\mu^{d,N}$.
\end{Not}

\subsection{Mean Field Model}
\subsubsection{Definition and intuition}
The Mean Field process is a non linear Markov process. That means that the temporal marginals of the distribution $\mu$ of the process is described by a generator also depending on these time marginals $\mu$. For that purpose we call this kind of generator non-linear generator. The change between a non linear Markov process and a homogeneous Markov process is: when determining the transitions in an homogeneous Markov process you have to look where the process is, while in the non linear case to determine the transition you look both where the process is and where it could be.\\

The Mean Field system is the non linear Markov process $(\mathfrak{C}(t),\mathfrak{D}(t))_t$: for $\beta^0 +\rho^0 = 1$ fixed (representing the initial density of cooperators and defectors) defined by: an initial probability measure of $\R_+$, $m_0$ representing the initial distribution of wealth of all individuals and an increment relation:

\begin{eqnarray}\label{eqn Def simpl system}
\mathfrak{D}(t_{n+1}^\mathfrak{D}) - \mathfrak{D}(t_{n}^\mathfrak{D}) = \mathds{1}_{\mathfrak{D(t_n^\mathfrak{D})>0}} \,U_\mathfrak{D}(t^\mathfrak{D}_{n+1})\\
\mathfrak{C}(t_{n+1}^\mathfrak{C}) - \mathfrak{C}(t_{n}^\mathfrak{C}) = \mathds{1}_{\mathfrak{C(t_n^\mathfrak{C})>0}} \,U_\mathfrak{C}(t^\mathfrak{C}_{n+1})
\end{eqnarray}
where:
\begin{itemize}
	\item[-] $(t_n^\mathfrak{D})_n$ and $(t_n^\mathfrak{C})_n$ are sequences of Poisson times of intensity ${v}$.
	\item[-] for $I = \lbrace t_n^\mathfrak{C} ,n \in \N \rbrace \cup \lbrace t_n^\mathfrak{D},n\in \N \rbrace$, all $t \in I$ and $U_\mathfrak{D}(t)$ and $U_\mathfrak{C}(t)$ are random variables independent from everything such that: 
	\[
	U_\mathfrak{D}(t) = \left\{\begin{array}{rl}
	 - 2 \mathbf{P} & \text{with probability } \frac{1}{2}\rho^0\PP(\mathfrak{D}(t)>0)\smallskip\\
	 \mathbf{T} & \text{with probability } \beta^0\PP(\mathfrak{C}(t)>0)\smallskip\\
	 0 & \text{ with probability } 1 - \beta^0\PP(\mathfrak{C}(t)>0) - \frac{1}{2}\rho^0\PP(\mathfrak{D}(t)>0)
	\end{array}\right.
	\]
 and 
	\[
	U_\mathfrak{C}(t) = \left \lbrace\begin{array}{rl}
	 -  \mathbf{S} & \text{with probability } \rho^0\PP(\mathfrak{D}(t)>0)\smallskip\\
	 \mathbf{R} & \text{with probability } \beta^0\PP(\mathfrak{C}(t)>0)\smallskip\\
	 0 & \text{ with probability } 1 - \beta^0\PP(\mathfrak{C}(t)>0) - \rho^0\PP(\mathfrak{D}(t)>0)
	\end{array}\right.
	\]
\end{itemize}

$(\mathfrak{C}(t),\mathfrak{D}(t))_t$ represents the wealth of a typical cooperator and a typical defector in an infinite population of individuals moving at a high speed.\\

\subsubsection{Justification of the mean field model} \label{Subsect Justif}

To arrive to the Mean Field model from the spatial model we have to assume that:
\begin{itemize}
\item There is no birth anymore. As a consequence $Z$ is constant over time. We also set $N \in \N$ the number of particles in the system.
\item The collection $(Z^i)_{1\leq i\leq N}$ are i.i.d with distribution $\beta^0 \delta_0 + \rho^0 \delta_1$ (with $\beta^0 + \rho^0 = 1$ previously fixed).
\item The moving rate $d$ belongs to $\N$ instead of $\R_+$. This is done in order to have an homogenization result.
\item The size of the torus depends on the initial number of particles. We set: $m^2 = N$. (We can also not link the size of the torus and the initial number of particles but in that case we have to slow the time of evolution replacing $(X(t),Y(t),Z)_t$ by $(X(t/N),Y(t/N),Z)_t$).
 \item Initially the distribution of wealth of all individuals is $m_0$ probability measure of $\R_+$. 
\end{itemize}
The generator of the spatial system is $\mathcal{A}$ with domain $C_b(E)$: for all $(x,y,z) \in E$
\[
\mathcal{A} f(x,y,z) = \mathcal{A}_d f(x,y,z) + v\,\mathcal{A}_g f(x,y,z).
\]
with $\mathcal{A}_d$ defined in (\ref{Gene Deplacement}) and $\mathcal{A}_g$ in (\ref{Gene Jeux spatial}). \\
Using Theorem 4.2 of \cite{gibaud2016spatialized}, when $d$ goes to $+\infty$, we have that there exists $\mu$ probability measure of $D(\R_+,\R\times \lbrace0,1 \rbrace)$ such that the particle system described by the generator (\ref{Gene System Spatial}) is $\mu$-chaotic. The analytic way of describing $\mu$ is done in the following. The useful consequence of that is the following convergence in finite dimensional distribution: with $(Y,z)$ random variable taking value in $D(\R_+,\R\times \lbrace0,1 \rbrace)$ and with distribution $\mu$ 

\begin{eqnarray}\label{eqnarray Consequence Convergence MF}
\forall i\in\N, \quad (Y^i,Z^i) \underset{\overset{\longrightarrow}{\substack{d \to +\infty \\ \text{then}\\N \to +\infty}}}{\mathcal{D}} (Y,z).
\end{eqnarray}
Let us describe analytically $\mu$. In order to make the notations lighter we use the following: \begin{itemize}
	\item  $\rho_t:= \PP(Y(t)>0|z=1)$,
	\item  $\beta_t:= \PP(Y(t)>0|z=0)$.
\end{itemize}
To have a good description of the distribution of $(Y(t),z)_t$ we use evolution equations given in Corollary 4.3 of \cite{gibaud2016spatialized}. 
The distribution of $(Y(t),z)_t$ verifies the following evolution equations: with $\mathcal{L}(Y(0)) = m_0$ and $\mathcal{L}(z)= \rho \delta_0 + (1 - \rho)\delta_1$:
\begin{itemize}
	\item for all $y \in \R$
	\begin{eqnarray} \label{Evol B MF}
	\begin{array}{rcl}
	\dro \PP(Y(t) = y|z = 0) & = &{v}\mathds{1}_{y  > \mathbf{R}} \beta^0\beta_t\PP(Y(t) = y - \mathbf{R} | z =0)  \smallskip\\
	& & + {v}\rho^0\rho_t \PP(Y(t) = y + \mathbf{S}|z=0) \smallskip\\
	& & - {v}(\rho^0\rho_t + \beta^0\beta_t) \PP(Y(t)=y|z =0)
	\end{array}
	\end{eqnarray}
	
	\item for all $y \in \R$
	
	\begin{eqnarray} \label{Evol R MF}
	\begin{array}{rcl}
	\dro \PP(Y(t) = y|z = 1) & = &{v}\mathds{1}_{y > \mathbf{T}}\beta^0\beta_t\PP(Y(t) = y - \mathbf{T}|z = 1)  \smallskip\\
	& & + {v}\frac{\rho^0\rho_t}{2}\PP(Y(t) = y + 2\mathbf{P}|z = 1) \smallskip\\
	& & + {v}\frac{\rho^0\rho_t}{2}\PP(Y(t) = y|z = 1) \smallskip\\
	& & - {v}(\rho^0\rho_t + \beta^0\beta_t) \PP(Y(t)=y|z=1).
	\end{array}
	\end{eqnarray}	
\end{itemize}
\vspace{10pt}
We want to prove that $(\mathfrak{C},\mathfrak{D})$ is a good description of $(Y(t),z)_t$. Firstly let us denote $\beta'_t = \PP(\mathfrak{C}(t)>0)$ and $\rho'_t = \PP(\mathfrak{D}(t)>0)$.
Let's verify that $\left(\mathfrak{C}(t)\right)_t$ and $\left(\mathfrak{D}(t)\right)_t$ verifies the ordinary differential equation (\ref{Evol B MF}),(\ref{Evol R MF}).
Because for all $y \in \R_+$ $ \forall y' \notin  \lbrace y + x : x \in \{-\mathbf{R},\mathbf{S}\} \rbrace$, ${\PP(\mathfrak{C}({t+h}) = y | \mathfrak{C}(t) = y') = o(h)}$ we have:
\begin{align*}
\PP(\mathfrak{C}(t+h) = y) & = \PP\left(\mathfrak{C}(t+h) = y  \cap \mathfrak{C}(t) = y\right)+ \PP\left(\mathfrak{C}(t+h) = y \cap \mathfrak{C}(t) \neq y\right) \medskip\\
 & = \PP\left(\mathfrak{C}(t+h) = y | \mathfrak{C}(t) = y\right) \PP\left(\mathfrak{C}(t) = y\right) \\
  & \hspace{10pt} + \PP\left(\mathfrak{C}(t+h) = y \cap \mathfrak{C}(t) \in \lbrace y - \mathbf{R},y + \mathbf{S}\rbrace\right) + o(h) \medskip\\
  & = \left(1 - v h (\beta^0 \beta'_t + \rho^0 \rho'_t) \right) \PP\left(\mathfrak{C}(t) = y\right) \\
   & \hspace{10pt}+ v h \beta^0 \beta'_t \mathds{1}_{y > \mathbf{R}} \PP\left(\mathfrak{C}(t) = y - \mathbf{R}\right) + vh \rho^0 \rho'_t \PP\left(\mathfrak{C}(t) = y + \mathbf{T}\right) + o(h).
\end{align*}

 $\PP(\mathfrak{C}(t+h) = y)$ is equal to:
\[
\PP(\mathfrak{C}(t) = y){v h}(1 - \rho_t - \beta_t) + \mathds{1}_{k>\mathbf{R}}{v h}\beta_t\PP(\mathfrak{C}(t) = y - \mathbf{R}) + {v h}\rho_t \PP(\mathfrak{C}(t) = y + \mathbf{S})+o(h).
\]
Dividing by $h$ and making $h\to 0$ we get the evolution equations (\ref{Evol B MF}) and (\ref{Evol R MF}) by saying that for all $y\in \R$ $\PP(Y(t)=y|z = 0) = \PP(\mathfrak{C}(t) = y)$ and $\PP(Y(t) = y|z=1) = \PP(\mathfrak{D}(t) =y)$ .\\

Since this process is non linear, we analyze a linearized version $(\overline{\mathfrak{C}}(t),\overline{\mathfrak{D}}(t))_t$. The linearized system is a system where cooperators and defectors can play even if their wealth are negative. It is a Mean field ghost version. The wealth of cooperators in the linearized $(\overline{\mathfrak{C}}(t))_t$ is defined by an initial measure $m_0$ and an increment relation:
	  	 \begin{eqnarray}
	  	 \overline{\mathfrak{C}}(t_{n+1}^\mathfrak{C}) - \overline{\mathfrak{C}}(t_{n}^\mathfrak{C}) = \overline{U}_\mathfrak{C}(t^\mathfrak{C}_{n+1})
	  	 \end{eqnarray}
	  	 where:
	  	 \begin{itemize}
	  	 	\item[-] $(t_n^\mathfrak{C})_n$ are defined in (\ref{eqn Def simpl system}).
	  	 	\item[-] for all $t \in \lbrace t_n^\mathfrak{C} ,n \in \N \rbrace$ and  $\overline{U}_\mathfrak{C}(t)$ are random variables independent from everything such that: 
	   	 	\[
	  	 	\overline{U}_\mathfrak{C}(t) = \left \lbrace\begin{array}{rl}
	  	 	 -  \mathbf{S} & \text{with probability } \rho^0\smallskip\\
	  	 	 \mathbf{R} & \text{with probability } \beta^0\smallskip\\
	  	 	 0 & \text{ with probability } 1 - \beta^0 - \rho^0
	  	 	\end{array}\right.
	  	 	\]
	  	 \end{itemize}

The consequence of this linearization is the following theorem that gives us a simple condition to have "survival" of cooperators \emph{ad vitam eternam} with positive probability.
\begin{thm}\label{thm Modele simplifié Garanti}
	Let us suppose that:
	
	\begin{eqnarray}\label{Hyp Drift positif}
	\beta^0 \mathbf{R} - \rho^0 \mathbf{S} >0
	\end{eqnarray}
	
	Then we have: for a fixed $q_0 >0$ {when } $t\to +\infty$ $\PP_{q_0}$ {almost surely}
	\[
	\overline{\mathfrak{C}}(t) \longrightarrow +\infty \qquad 
	\]
	and
	\[
	\PP_{q_0} (\forall t\geq 0, \overline{\mathfrak{C}}(t)>0) >0.
	\]
\end{thm}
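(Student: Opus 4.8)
The plan is to recognize $(\overline{\mathfrak{C}}(t))_t$ as a compound Poisson process and to treat the two assertions separately: the first is a law-of-large-numbers statement, the second a ladder-epoch statement about the embedded random walk. Write $\mathcal{N}$ for the rate-$v$ Poisson process whose jump times are $(t_n^\mathfrak{C})_n$, set $N_t = \mathcal{N}_t$, and let $W_n = \sum_{k=1}^{n} \overline{U}_\mathfrak{C}(t_k^\mathfrak{C})$ be the embedded random walk, so that by construction $\overline{\mathfrak{C}}(t) = q_0 + W_{N_t}$ for every $t \geq 0$. The increments $\overline{U}_\mathfrak{C}(t_k^\mathfrak{C})$ are i.i.d., bounded (they take values in $\{-\mathbf{S},0,\mathbf{R}\}$), and have mean $\E[\overline{U}_\mathfrak{C}] = \beta^0 \mathbf{R} - \rho^0 \mathbf{S}$, which is strictly positive precisely by hypothesis (\ref{Hyp Drift positif}).

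For the first claim I would apply the strong law of large numbers to the bounded i.i.d. increments: $W_n/n \to \beta^0\mathbf{R} - \rho^0\mathbf{S} > 0$ almost surely, hence $W_n \to +\infty$ almost surely. Since $v > 0$, the Poisson process satisfies $N_t \to +\infty$ almost surely, and composing gives $\overline{\mathfrak{C}}(t) = q_0 + W_{N_t} \to +\infty$ almost surely. This settles $\overline{\mathfrak{C}}(t) \to +\infty$ with no further work.

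For the second claim I would reduce the continuous-time event to the discrete walk: since $\overline{\mathfrak{C}}(t) = q_0 + W_{N_t}$ and $N_t$ visits every non-negative integer, one has the inclusion $\{\inf_{n \geq 0} W_n > -q_0\} \subseteq \{\forall t \geq 0,\ \overline{\mathfrak{C}}(t) > 0\}$, so it is enough to lower-bound $\PP_{q_0}(\inf_{n \geq 0} W_n > -q_0)$. Let $\underline{W} = \inf_{n \geq 0} W_n$; by the first claim $W_n \to +\infty$, hence $\underline{W} > -\infty$ almost surely, and since $W_0 = 0$ we have $\underline{W} \leq 0$. Because $q_0 > 0$, the event $\{\underline{W} = 0\}$ is contained in $\{\underline{W} > -q_0\}$, so it suffices to prove $\PP_{q_0}(\underline{W} = 0) > 0$. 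Here I would use the strict descending ladder decomposition: writing $q := \PP(\exists\, n \geq 1 : W_n < 0)$, the strong Markov property makes the number of strict descending ladder epochs geometric and the running minimum $\underline{W}$ a sum of that many i.i.d. strictly negative ladder heights. If $q = 1$ the walk would have infinitely many ladder epochs almost surely and $\underline{W} = -\infty$, contradicting $\underline{W} > -\infty$; hence $q < 1$. The complementary event of having no descending ladder epoch, of probability $1 - q > 0$, coincides with $\{W_n \geq 0\ \forall n \geq 1\} = \{\underline{W} = 0\}$, so $\PP_{q_0}(\forall t \geq 0,\ \overline{\mathfrak{C}}(t) > 0) \geq 1 - q > 0$.

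The routine part is the first claim; the substance is the second, and the step I expect to be the main obstacle is justifying $\PP_{q_0}(\underline{W} = 0) > 0$. The delicate point is that almost-sure finiteness of $\underline{W}$ alone does not give it mass at $0$; one genuinely needs the ladder structure, equivalently the fact that a positive-drift walk fails to have a strict descending ladder epoch with positive probability. I would either develop the geometric ladder-epoch argument above or, to keep the paper self-contained, invoke the standard random-walk fact that $\E[\overline{U}_\mathfrak{C}] > 0$ forces the strict descending ladder epoch to be defective, for instance via Spitzer's identity $\PP(W_n \geq 0\ \forall n \geq 1) = \exp\!\big(-\sum_{n \geq 1} n^{-1}\PP(W_n < 0)\big)$, whose exponent is finite since $\PP(W_n < 0)$ decays exponentially by Cramér's bound for bounded increments.
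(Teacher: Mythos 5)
Your proof is correct, and its first half (strong law of large numbers on the embedded walk, composed with the Poisson clock) is essentially identical to the paper's. For the second claim, however, you take a genuinely different route. The paper stays elementary: from the almost sure divergence it deduces that the global minimum $\min_{t}\overline{\mathfrak{C}}(t)$ is a.s. finite, picks a level $\overline{M}$ with $\PP_{q_0}\left(\min_{t}\overline{\mathfrak{C}}(t) > \overline{M}\right) \geq 1-\eta$, uses the L\'evy (translation-invariance) property to get $\PP_{q_0+|\overline{M}|}\left(\min_{t}\overline{\mathfrak{C}}(t) > \overline{M}+|\overline{M}|\right) \geq 1-\eta$ with $\overline{M}+|\overline{M}|\geq 0$, and then glues this to a finite-time, positive-probability path from $q_0$ up to $q_0+|\overline{M}|$ staying positive, multiplying the two probabilities by the Markov property. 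You instead invoke fluctuation theory for the embedded walk: for a positive-drift walk the strict descending ladder epoch is defective, so $\PP(W_n \geq 0\ \forall n\geq 1) = \PP(\underline{W}=0) > 0$, which suffices since $q_0>0$. Your route is shorter, more classical, and more quantitative (it produces the explicit bound $1-q$, or Spitzer's exponential formula), and it sidesteps a slightly sketchy point in the paper's argument, namely hitting the level $q_0+|\overline{M}|$ \emph{exactly} with positive probability while staying positive (the paper needs $\overline{M}$ in the lattice $\mathbf{R}\Z+\mathbf{S}\Z$ and an explicit bridge event); the paper's route, in exchange, uses nothing beyond the Markov property and translation invariance, so it remains self-contained. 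One small step you should spell out: the implication ``$q=1$ forces $\underline{W}=-\infty$'' requires that a sum of infinitely many i.i.d. strictly negative ladder heights diverges to $-\infty$; since $\mathbf{R}\Z+\mathbf{S}\Z$ may be dense in $\R$, the heights can be arbitrarily close to $0$, so one should fix $\epsilon>0$ with $\PP(H_1<-\epsilon)>0$ and apply the second Borel--Cantelli lemma to the independent events $\lbrace H_i<-\epsilon\rbrace$. With that sentence added, your argument is complete.
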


We also have a result about the concentration of wealth of cooperators.

\begin{prop}\label{Prop Concentration Wealth MF DPD}
For all $q_0 \in \R_+$

We denote :\[
\mathfrak{m} := {v}\left(\beta^0 \mathbf{R} - \rho^0\mathbf{S}\right), \qquad \sigma^2 := {v}\left(\beta^0 \mathbf{R}^2 + \rho^0 \mathbf{S}^2\right).
\]

Then we have : $\forall \eta>0,t\geq0$
\begin{eqnarray}\label{eqn tchebychev MF DPD}
\mathbb{P}_{q_0}\left( \overline{\mathfrak{C}}(t)\in \left[ q_{0}+\mathfrak{m}t-\eta \sqrt{\sigma ^{2}t},q_{0}+\mathfrak{m}t+\eta \sqrt{\sigma
^{2}t}\right] \right) \geq 1-\eta ^{-2}.
\end{eqnarray}

Moreover for all $\tau$ such that:

\[\tau <q_{0}-\frac{\eta ^{2}\sigma ^{2}}{4\mathfrak{m}},\]

then 

\[\mathbb{P}(\overline{\mathfrak{C}}(t)>\tau )\geq 1-\eta ^{-2}.\]

\end{prop}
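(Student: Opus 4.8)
The plan is to recognize that the linearized wealth process is a \textbf{compound Poisson process} and then to reduce both inequalities to Chebyshev's inequality together with an elementary one-variable optimization. First I would write, under $\PP_{q_0}$,
\[
\overline{\mathfrak{C}}(t) = q_0 + \sum_{n=1}^{\mathcal{N}_t} \overline{U}_\mathfrak{C}(n),
\]
where $\mathcal{N}$ is the Poisson process of intensity $v$ counting the times $(t_n^\mathfrak{C})_n$ and the $\overline{U}_\mathfrak{C}(n)$ are i.i.d., independent of $\mathcal{N}$, with the three-point law given above. This is immediate from the increment relation defining $\overline{\mathfrak{C}}$, since the jumps are i.i.d. and occur exactly at the Poisson times.

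Second I would compute the first two moments using the standard compound-Poisson identities. A single increment has $\E[\overline{U}_\mathfrak{C}] = \beta^0\mathbf{R} - \rho^0\mathbf{S}$ and $\E[\overline{U}_\mathfrak{C}^2] = \beta^0\mathbf{R}^2 + \rho^0\mathbf{S}^2$, and since $\E[\mathcal{N}_t] = vt$ the identities $\E[\overline{\mathfrak{C}}(t)] = q_0 + vt\,\E[\overline{U}_\mathfrak{C}]$ and $\V[\overline{\mathfrak{C}}(t)] = vt\,\E[\overline{U}_\mathfrak{C}^2]$ give exactly $\E_{q_0}[\overline{\mathfrak{C}}(t)] = q_0 + \mathfrak{m}t$ and $\V_{q_0}[\overline{\mathfrak{C}}(t)] = \sigma^2 t$. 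Applying Chebyshev's inequality at level $\eta\sqrt{\sigma^2 t}$ then yields
\[
\PP_{q_0}\left(|\overline{\mathfrak{C}}(t) - q_0 - \mathfrak{m}t| \geq \eta\sqrt{\sigma^2 t}\right) \leq \eta^{-2},
\]
which is precisely the two-sided bound (\ref{eqn tchebychev MF DPD}).

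For the second statement I would use only the lower tail coming from that estimate: with probability at least $1 - \eta^{-2}$ one has $\overline{\mathfrak{C}}(t) \geq g(t)$, where $g(t) := q_0 + \mathfrak{m}t - \eta\sqrt{\sigma^2 t}$. Because the drift is positive (the standing hypothesis $\mathfrak{m} = v(\beta^0\mathbf{R} - \rho^0\mathbf{S}) > 0$ of Theorem \ref{thm Modele simplifié Garanti}), I would minimize $g$ over $t \geq 0$: viewing $g$ as a quadratic in $\sqrt{t}$, the minimizer is $t^\star = \eta^2\sigma^2/(4\mathfrak{m}^2)$ and the minimal value is $g(t^\star) = q_0 - \eta^2\sigma^2/(4\mathfrak{m})$. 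Hence, for every $\tau < q_0 - \eta^2\sigma^2/(4\mathfrak{m})$ and every $t \geq 0$, one has $\tau < g(t) \leq \overline{\mathfrak{C}}(t)$ on the event of probability at least $1 - \eta^{-2}$, so $\PP(\overline{\mathfrak{C}}(t) > \tau) \geq 1 - \eta^{-2}$.

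No step is a genuine obstacle here; the argument is a direct second-moment computation. The only points requiring care are the compound-Poisson variance identity $\V[\overline{\mathfrak{C}}(t)] = vt\,\E[\overline{U}_\mathfrak{C}^2]$, which must absorb both the randomness of the jump sizes and that of their number (and hence uses $\V[\overline{U}] + (\E[\overline{U}])^2 = \E[\overline{U}^2]$), and the observation that the threshold $\tau < q_0 - \eta^2\sigma^2/(4\mathfrak{m})$ is calibrated precisely so that $\tau$ lies strictly below the global-in-$t$ minimum of the lower envelope $g$. This is what makes the final tail bound hold \emph{uniformly} in $t$ rather than only at a single fixed time.
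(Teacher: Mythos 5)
Your proposal is correct and follows essentially the same route as the paper: compute $\E_{q_0}[\overline{\mathfrak{C}}(t)] = q_0 + \mathfrak{m}t$ and $\V_{q_0}[\overline{\mathfrak{C}}(t)] = \sigma^2 t$, apply Chebyshev's inequality to get (\ref{eqn tchebychev MF DPD}), and obtain the second bound by minimizing the lower envelope $t \mapsto q_0 + \mathfrak{m}t - \eta\sqrt{\sigma^2 t}$ as a quadratic in $\sqrt{t}$ (same minimizer, same minimal value $q_0 - \eta^2\sigma^2/(4\mathfrak{m})$, same implicit reliance on $\mathfrak{m}>0$). The only difference is how the moments are produced: you use the compound-Poisson identities $\E[\overline{\mathfrak{C}}(t)] - q_0 = vt\,\E[\overline{U}_\mathfrak{C}]$ and $\V[\overline{\mathfrak{C}}(t)] = vt\,\E[\overline{U}_\mathfrak{C}^2]$, whereas the paper differentiates the Kolmogorov forward equations for $\PP(\overline{\mathfrak{C}}(t) = q_0 + k\mathbf{R} - \ell\mathbf{S})$; your route is a touch more elementary and cleanly keeps the factor $v$ consistent, which the paper's computation handles somewhat loosely.
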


\vspace{10pt}

\section{Proofs on the Spatial Model}
Before beginning the proofs let us introduce some useful tools. 
In Section \ref{Spatial Model}, every movement, game, birth is managed by independent Poisson processes with rates $d$,$v$ or ${b}$. Instead of considering this collection of Poisson processes we consider: \begin{enumerate}
	\item A unique Poisson process with rate $K(b+d) + \frac{K(K-1)}{2} v$ \emph{i.e.} the sum of the rates of the previous Poisson processes. This Poisson process gives a sequence of Poisson times $(t_n)_n$
	\item A sequence (independent of everything) of independent random variables $(C_n)_n$ selects if a movement, game or birth happens. An element of this sequence choose a movement with probability $ \frac{K d}{K(b+d) + \frac{K(K-1)}{2} v}$, a game with probability $ \frac{\frac{K(K-1)}{2} v}{K(b+d) + \frac{K(K-1)}{2} v}$ and a birth with probability $ \frac{K b}{K(b+d) + \frac{K(K-1)}{2} v}$.
	\item 3 sequences $(E^m_n)_n, (E^g_n)_n, (E^b_n)_n,$ (independent of everything) of i.i.d random variables uniformly choosing the agents. $(E^m_n)_n$ and $(E^b_n)_n$ are made of uniform on $\lbrace 1,\dots K \rbrace$ random variables. $(E^g_n)_n$ is made of uniform on $\left\lbrace 1,\dots, \frac{K(K-1)}{2} \right\rbrace$ random variables.
	\item At the $n$-th realization of the unique Poisson process, $C_n$ chooses the type of evolution (movement, game, or birth). If $C_n = $ movement (resp birth) then $E^m_n$ (resp $E^b_n$) gives the particle that will move (resp will give birth (if she is wealthy enough and if there are less than $K$ particles on the torus)). If the particle moves, we draw $V_n$ an other uniform on $\lbrace Top, Bottom, Left,Right\rbrace$ random variable to decide where the particle moves. If $C_n=$ game then $E^g_n$ gives the couple of particles that will play (if their $Z \neq -1$, if they are on the same site and if their wealths are positive).
	\end{enumerate}
	In this section, instead of considering continuous time Markov process $(X(t),Y(t),Z(t))_t$ we consider the induced Markov chain $(X_n,Y_n,Z_n)_n:= (\sigma_n)_n$ where $\forall n \in \N$ $(X_n,Y_n,Z_n) = (X({t_n}),Y(t_n),Z(t_n))$.\\

\begin{definition}
We call the photograph of a configuration $\sigma$, the data of the positions of individuals and their strategies. The photograph of $\sigma$ can be seen as the equivalent class of $\sigma$ for the following equivalence relation: $\sigma \sim \tilde{\sigma}$ if and only if $\forall x\in (\Z/m\Z)^2,\, \forall z\in \lbrace 0,1 \rbrace$ \[\text{Card} \left\lbrace i \in \lbrace1,\dots K \rbrace,\  X^i =x,Y^i>0,Z^i=z \right\rbrace = \text{Card} \left\lbrace i \in \lbrace1,\dots K \rbrace, \  \tilde{X}^i =x,\tilde{Y}^i>0,\tilde{Z}^i=z \right\rbrace.
\]
In words, two configurations are equivalent if and only if on each position $x\in (\Z/m\Z)^2$ there is the same number of cooperators with positive wealth and defectors with positive wealth.\\
We denote by $\mathfrak{p}(\bullet)$ the canonical projection on the space of photographs. 
\end{definition}
\begin{remarque}
There are less than $\prod\limits_{N=0}^{K} (2m^2)^N = (2m^2)^{\frac{K(K+1)}{2}}$ photographs. We call $q$ the number of photographs. Those photographs are denoted $\mathfrak{p}_1,\dots, \mathfrak{p}_{q}$.
\end{remarque}
\begin{Not}
For simplicity, we will denote the wealth of cooperator $i$ (with $Z^i = 0$), $\mathfrak{C}^i$ and the wealth of defector $j$ (with $Z^j = 1$), $\mathfrak{D}_j$.\\
For a configuration $\sigma$ we denote $N_\mathfrak{C}(\sigma)$ (resp. $\N_\mathfrak{D}(\sigma)$) the set of the indexes of the cooperators (resp. defectors) with positive wealth.
\end{Not}

\subsection{Almost sure extinction}
In this section we prove Theorem \ref{Intro thm Extinction presque sure}.

\begin{proof} \textcolor{white}{text}\\

	Firstly let us define the first cooperator-defector time. For that let us denote for all $n\in \N$, $\mathfrak{C}^{tot}_n:= \sum_{i = 1}^{K} \mathds{1}_{Z^i = 0} Y^i_n$ the sum of wealth of cooperators. 
		\begin{definition}
			We define $\tau$ the first time when a cooperator-defector game happens that is:
			\[
			\tau_1 = \inf \left\lbrace n\in \N^* / \mathfrak{C}^{tot}_{n} - \mathfrak{C}^{tot}_{n-1} < 0 \right\rbrace
			\]
			Let us denote $(\tau_i)_i$ the sequence of these cumulated stopping time with $\tau_0 = 0$. If there is no cooperator left after $\tau_i$ we define for all $k\in \N^*$ $\tau_{i+k} = +\infty$ a.s. . That is the time of the first Cooperator-Defector game is $\tau_1$, the time of the second one is $\tau_2$ et so on. 
		\end{definition}
		Firstly let us prove the uniform (on the configurations) upper bounding lemma. 
		\begin{lemme}\label{Lemme Majoration uniforme Extinction}
			There is $\mathfrak{m}\in \N$ and $\varepsilon >0$ such that for each configuration $\sigma \in \mathcal{C}'$  and all $s\in \R$ we have: 
			\[
			\PP_\sigma(\tau_1 > s) \leq \left( 1 - \varepsilon\right)^{\lfloor s / \mathfrak{m} \rfloor}
			\]
		\end{lemme}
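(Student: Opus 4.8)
The plan is to produce a single block length $\mathfrak{m}\in\N$ and a single probability $\varepsilon>0$ so that, starting from \emph{any} configuration still in $\mathcal{C}'$, a cooperator--defector game is forced within $\mathfrak{m}$ steps of the induced chain $(\sigma_n)_n$ with probability at least $\varepsilon$, and then to chain these blocks together with the Markov property. The step I expect to be the genuine obstacle is not the forcing itself but checking that the chain cannot leave $\mathcal{C}'$ before $\tau_1$: one must rule out that all defectors (or all cooperators) die before the first C--D game, since otherwise $\tau_1=+\infty$ with positive probability and a geometric bound would be impossible. So I would first prove the invariant $\{\tau_1>n\}\subset\{\sigma_n\in\mathcal{C}'\}$. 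On $\{\tau_1>n\}$ no cooperator has been defected, so a cooperator's wealth changes only through C--C games (a gain of $\mathbf{R}$) or through giving birth, which transfers $w_0$ to a child and, because birth requires $Y>w_c>w_0$, leaves the parent with positive wealth; hence $\mathfrak{C}^{tot}$ is nondecreasing and at least one positive-wealth cooperator survives. For defectors the decisive point is that a defector loses wealth only in a D--D game, which requires two positive-wealth defectors on the same site and lowers the wealth of exactly one of them; therefore a D--D event can never bring the number of positive-wealth defectors from one down to zero, and since births only increase that number, it stays $\ge 1$. Together these give $\sigma_n\in\mathcal{C}'$ on $\{\tau_1>n\}$.

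Next I would construct the forcing event uniformly in the wealth coordinates. Fix $\sigma\in\mathcal{C}'$, pick one positive-wealth cooperator $i_0$ and one positive-wealth defector $j_0$, and prescribe the following trajectory of the induced chain: at each of the first steps select \emph{move}, the walker $i_0$, and the direction that decreases the torus distance from $X^{i_0}$ to $X^{j_0}$ (never moving $j_0$), until $i_0$ reaches the site $X^{j_0}$; then at the next step select \emph{game} and the pair $(i_0,j_0)$. Since moves do not change wealth and no game is scheduled before the last one, both $i_0$ and $j_0$ still have positive wealth at the final step, so the prescribed C--D game actually occurs and $\mathfrak{C}^{tot}$ strictly drops (the cooperator receives $-\mathbf{S}$); hence $\tau_1\le\ell$, where $\ell\le m+1$ bounds the length of the schedule because the $L^1$-distance on $(\Z/m\Z)^2$ is at most $m$. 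Conditioning on the selection variables $(C_n,E_n,V_n)$ taking exactly the prescribed values, the probability of this event is a product of per-step selection probabilities (type, particle, direction), each of which is wealth-independent, so it depends only on the photograph $\mathfrak{p}(\sigma)$. Taking $\mathfrak{m}=m+1$ and letting $\varepsilon$ be the minimum of this positive lower bound over the finitely many photographs $\mathfrak{p}_1,\dots,\mathfrak{p}_q$, we obtain $\PP_\sigma(\tau_1\le\mathfrak{m})\ge\varepsilon$ for every $\sigma\in\mathcal{C}'$.

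Finally I would iterate over blocks of length $\mathfrak{m}$. By the invariant, on $\{\tau_1>j\mathfrak{m}\}$ the configuration $\sigma_{j\mathfrak{m}}$ lies in $\mathcal{C}'$, so applying the Markov property at the deterministic time $j\mathfrak{m}$ together with the uniform lower bound of the previous step gives $\PP_\sigma(\tau_1>(j+1)\mathfrak{m}\mid \sigma_0,\dots,\sigma_{j\mathfrak{m}})\le(1-\varepsilon)\,\mathds{1}_{\{\tau_1>j\mathfrak{m}\}}$. Taking expectations and inducting on $j$ yields $\PP_\sigma(\tau_1>k\mathfrak{m})\le(1-\varepsilon)^k$, and since $\tau_1>s$ forces $\tau_1>\mathfrak{m}\lfloor s/\mathfrak{m}\rfloor$, we conclude $\PP_\sigma(\tau_1>s)\le(1-\varepsilon)^{\lfloor s/\mathfrak{m}\rfloor}$ for all $s\in\R$ (the bound being trivial for $s<0$). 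The only delicate ingredient is the invariant of the first paragraph; the remainder is the standard ``force with fixed probability, then iterate'' scheme, with the reduction to finitely many photographs ensuring that $\varepsilon$ may be chosen uniformly over the continuous wealth coordinates.
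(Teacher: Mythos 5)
Your proposal is correct and follows essentially the same route as the paper's proof: a wealth-independent (photograph-level) uniform lower bound $\PP_\sigma(\tau_1 \leq \mathfrak{m}) \geq \varepsilon$ valid for every $\sigma \in \mathcal{C}'$, combined with the Markov property iterated over blocks of length $\mathfrak{m}$ to get the geometric decay. The only differences are presentational: you exhibit an explicit forcing schedule (yielding the concrete value $\mathfrak{m} = m+1$) and spell out the invariant that the chain remains in $\mathcal{C}'$ before $\tau_1$ for both species, whereas the paper defines $\mathfrak{m}$ abstractly via the finiteness of photographs and only argues explicitly that defectors cannot go extinct.
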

		\begin{proof}

			Firstly let us notice, if there is at least one defector initially, defectors cannot be extinct. Indeed the only way that a defector dies is via Defector-Defector game yet since the payoff of such a game is a random variable with distribution $\frac{1}{2}\delta_{(0,-2\mathbf{P})} + \frac{1}{2} \delta_{(-2\mathbf{P},0)}$ then, when they play together two defectors cannot die at the same time. As a consequence there will be at least one defector left at any time. Moreover since $(\Z/m\Z)^2$ is finite we have for each $\sigma \in \mathcal{C}'$ $\tau_1 < +\infty$ $\PP_\sigma$ a.s.\\
			
			Let $(T_n)_n$ be the sequence of events $T_n = \lbrace \tau_1 = n\rbrace$.
			Let $\mathfrak{m}$ the maximum number of realizations of Poisson process for a defector to play with a cooperator with positive probability in all configurations \emph{i.e.} 
			\[
			\mathfrak{m} = \max\limits_{\sigma \in \mathcal{C}'} \min \lbrace n \in \N / \PP_\sigma (T_n) >0 \rbrace.
			\]
			Because there is a finite number of photographs the existence of $\mathfrak{m}$ is insured. \\
			Then there is $\varepsilon>0$ such that for each configuration:
			
			\begin{eqnarray}\label{Inegal majoration uniforme en configuration Proba}
			\PP_\sigma(\tau_1 \leq \mathfrak{m}) \geq \varepsilon
			\end{eqnarray}

			For each configuration $\sigma^0 \in \mathcal{C}'$ and all $k\in \N^*$ we have:
			\begin{align*}
			\PP_{\sigma^0}(\tau_1>k\mathfrak{m}) &= \PP_{\sigma^0}(\tau_1 > \mathfrak{m})\,\PP_{\sigma^0}(\tau_1>k\mathfrak{m}|\tau_1 > \mathfrak{m})\\
			 & \leq (1 - \varepsilon) \sum_{\sigma^1 \in \mathcal{C}'} \PP_{\sigma^0}(\tau_1>k\mathfrak{m},\sigma_\mathfrak{m}=\sigma^1|\tau_1 > \mathfrak{m})
			\end{align*}
			Moreover by the Strong Markov property, we have:
			\begin{align*}
			\PP_{\sigma^0}(\tau_1>k\mathfrak{m})	& \leq (1- \varepsilon)\sum_{\sigma^1 \in \mathcal{C}'} \PP_{\sigma^0} (\tau_1 > k \mathfrak{m} | \sigma_\mathfrak{m} = \sigma^1, \tau_1 > \mathfrak{m}) \PP_{\sigma^0}(\sigma_\mathfrak{m} = \sigma^1|\tau_1 > \mathfrak{m}) \medskip\\
	 &		\leq (1 - \varepsilon)\sum\limits_{\sigma^1 \in \mathcal{C}'} \PP_{\sigma^1}(\tau_1 > (k-1)\mathfrak{m})\PP_{\sigma^0}(\sigma_\mathfrak{m} = \sigma^1 | \tau_1 > \mathfrak{m}) \medskip\\
					& \leq (1 - \varepsilon)^{k}\prod_{i = 1}^{k} \underbrace{\sum_{\sigma^i \in \mathcal{C}'} \PP_{\sigma^{i-1}}(\sigma_{\mathfrak{m}} = \sigma^i| \tau_1 > \mathfrak{m})}_{=1}\medskip\\
					& \leq (1 - \varepsilon)^k\\
					\end{align*}
	\end{proof}
	Now that the lemma is proved let us use it to prove the theorem.\\ 
	\begin{align*}
	\E_\sigma({\mathfrak{C}}^{tot}_{\tau_{n+1}} - {\mathfrak{C}}^{tot}_{\tau_{n}} | \exists i \in N_\mathfrak{C}(\tau_n)\  {\mathfrak{C}}^{i}_{\tau_{n}} > 0) &= \sum\limits_{\sigma' \in \mathcal{C}'} \E_\sigma \left(\left({\mathfrak{C}}^{tot}_{\tau_{n+1}} - {\mathfrak{C}}^{tot}_{\tau_{n}} \right) \mathds{1}_{{\sigma}_{\tau_n}} = \sigma' | \exists i \in N_\mathfrak{C}(\tau_n) \ {\mathfrak{C}}^{i}_{\tau_{n}} > 0\right)\\
	& = \sum\limits_{\sigma' \in \mathcal{C}'} \E_{\sigma'}\left({\mathfrak{C}}^{tot}_{\tau_{1}} - {\mathfrak{C}}^{tot}_{0} \right) \PP_\sigma\left({{\sigma}_{\tau_n} = \sigma' | \exists i \in N_\mathfrak{C}(\tau_n)\  {\mathfrak{C}}^{i}_{\tau_{n}} > 0}\right)\\
	& \leq \sum\limits_{\sigma' \in \mathcal{C}'} \left(\E_{\sigma'} \left({\tau}_1\right) \mathbf{R} - \mathbf{S}\right) \PP_\sigma({\sigma}_{\tau_n} = \sigma'| \exists i \in N_\mathfrak{C}(\tau_n) \ {\mathfrak{C}}^{i}_{\tau_{n}} > 0) \\
	& \leq \mu \mathbf{R} - \mathbf{S}.
		\end{align*}
		
		The hypothesis of the theorem is: 
		\[
		\nu = \mu \mathbf{R} - \mathbf{S} < 0
		\]

	We have: for all $n\in \N$
	\[
	\mathfrak{C}^{tot}_{\tau_{n+1}} = \sum_{k=0}^{n} \mathfrak{C}^{tot}_{\tau_{k+1}} - \mathfrak{C}^{tot}_{\tau_{k}} + \mathfrak{C}^{tot}_{0}
	\]
	Let $\sigma \in \mathcal{C}'$ we get:
	\begin{align*}
	\E_\sigma (\mathfrak{C}^{tot}_{\tau_{n+1}})& = \E_\sigma(\mathfrak{C}^{tot}_{0}) + \sum_{k=0}^{n} \E_\sigma(\mathfrak{C}^{tot}_{\tau_{k+1}} - \mathfrak{C}^{tot}_{\tau_{k}}) \\
	 &= \E_\sigma(\mathfrak{C}^{tot}_{0}) + \sum_{k=0}^{n} \E_\sigma(\mathfrak{C}^{tot}_{\tau_{k+1}} - \mathfrak{C}^{tot}_{\tau_{k}}| \exists i\in N_\mathfrak{C}(\tau_k)\  \mathfrak{C}^i_{\tau_{k}} > 0) \PP_\sigma(\exists i \in N_\mathfrak{C}(\tau_k)\  \mathfrak{C}^i_{\tau_{k}} > 0) \\ & \hspace{13 pt}+  \sum_{k=0}^{n} \E_\sigma(\mathfrak{C}^{tot}_{\tau_{k+1}} - \mathfrak{C}^{tot}_{\tau_{k}}| \forall i \in N_\mathfrak{C}(\tau_k) \ \mathfrak{C}^i_{\tau_k} \leq 0) \PP_\sigma (\forall i \in N_\mathfrak{C}(\tau_k) \  \mathfrak{C}^i_{\tau_k} \leq 0)\\
	 & \leq \E_\sigma(\mathfrak{C}^i_{0}) + \nu \sum_{k = 0}^{n} \PP_\sigma(\exists i \in N_\mathfrak{C}(\tau_k)\  \mathfrak{C}^i_{\tau_{k}} > 0)
	\end{align*}
	But $\nu <0$ and $\forall n \in \N$ $\E_\sigma(\mathfrak{C}^{tot}_{\tau_{n}}) > - K \mathbf{S}$ then we have:
	\[
	\sum_{k =0}^{+\infty} \PP_\sigma(\exists i \in N_\mathfrak{C}(\tau_k)\ \mathfrak{C}^i_{\tau_{k}} > 0) < +\infty.
	\]
	Applying Borel Cantelli's Lemma we have: $\PP_\sigma$ almost surely eventually all cooperators will be dead.
	 
\end{proof}

\subsection{Coexistence \emph{ad vitam eternam}}
Before proving Theorem \ref{thm Coexistence ad vitam eternam Spatial}, let us prove an almost sure divergence lemma.
\begin{lemme}
Let $(X_n)_n$ a Markov chain on $\R$ such that:\begin{itemize}
\item $\exists X$ random variable in $\R_+$ such that: $\forall x_0 \in \R$, $\E_{x_0} (X) \leq \kappa$, $\E_{x_0} (X^2) \leq \kappa'$ and $\E_{x_0} (X^4) \leq \kappa''$ and such that: $\forall x_0 \in \R$ $\forall n \in \N$
\[
|X_{n+1} - X_n| \leq X \quad \PP_{x_0}\  a.s.
\]

\item $\exists$ $\tilde{X}$ random variable from $(\Omega,\mathcal{A},\PP)$ to $\R$  with positive expectation $\E(X)=\alpha >0$ such that:
\[
\forall s\in\R,\ \forall x_0\in \R, \qquad \PP_{x_0}(X_1 - X_0 > s) \leq \PP(\tilde{X}>s)
\]
\end{itemize} 
Then we have: for a fixed $0<\delta< \alpha$ $\PP_\sigma$ almost surely: $\exists N_0(\sigma) \in \N$, $\forall N> N_0(\sigma)$
		
		\[
		X_N  > \delta N \longrightarrow +\infty.
		\]
\end{lemme}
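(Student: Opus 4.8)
The plan is to extract a positive linear drift from the increments and to show that the fluctuations around that drift are negligible at the linear scale. Write $D_n := X_{n+1}-X_n$ and let $(\mathcal{F}_n)_n$ be the natural filtration of the chain. The second hypothesis, which compares the upper tail of each increment with that of $\tilde{X}$, provides (uniformly in the current state, via the Markov property) a lower bound on the conditional drift: the tail comparison integrates to an inequality of means, so $b_n := \E(D_n \mid \mathcal{F}_n) = \E_{X_n}(X_1-X_0) \geq \E(\tilde{X}) = \alpha$ for every $n$. Splitting each increment into its predictable and martingale parts, set $\xi_n := D_n - b_n$, so that $(\xi_n)_n$ is a martingale difference sequence for $(\mathcal{F}_n)_n$ and
\[
X_N = X_0 + \sum_{n=0}^{N-1} b_n + \sum_{n=0}^{N-1}\xi_n \geq X_0 + \alpha N + M_N, \qquad M_N := \sum_{n=0}^{N-1}\xi_n .
\]
It then suffices to prove that $M_N/N \to 0$ almost surely, for then $X_N \geq X_0 + (\alpha - o(1))N$, which beats $\delta N$ once $N$ is large because $\delta < \alpha$.

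To control $M_N$ I would use the fourth-moment (Burkholder--Davis--Gundy) estimate afforded by the first hypothesis. Since $|D_n| \leq X$ with $\E_{x_0}(X^4)\leq\kappa''$ uniformly in $x_0$, and $|b_n|\leq\E(|D_n|\mid\mathcal{F}_n)\leq\kappa$, the centered increments satisfy $\E(\xi_n^4\mid\mathcal{F}_n)\leq C$ and $\E(\xi_n^2\mid\mathcal{F}_n)\leq C$ for a constant $C=C(\kappa',\kappa'')$, uniformly in $n$ and in the state. Expanding $\E(M_N^4)$ and discarding every monomial whose largest index is unique (these vanish because $\E(\xi_n\mid\mathcal{F}_n)=0$), only the diagonal terms $\E(\xi_n^4)$ and the paired terms $\E(\xi_k^2\xi_l^2)$ survive, whence
\[
\E(M_N^4) \leq C\Big( \sum_{n=0}^{N-1}\E(\xi_n^4) + \big(\sum_{n=0}^{N-1}\E(\xi_n^2)\big)^2\Big) \leq C' N^2 .
\]
This is precisely what a Burkholder--Davis--Gundy inequality $\E(M_N^4)\leq C\,\E(\langle M\rangle_N^2)$ yields, the quadratic variation $\langle M\rangle_N$ being bounded by $CN$.

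With this quadratic-in-$N$ bound the conclusion follows by Markov's inequality and Borel--Cantelli. For any fixed $\varepsilon>0$,
\[
\PP\big(|M_N| > \varepsilon N\big) \leq \frac{\E(M_N^4)}{\varepsilon^4 N^4} \leq \frac{C'}{\varepsilon^4\,N^2},
\]
which is summable in $N$, so almost surely $|M_N|\leq\varepsilon N$ for all large $N$; letting $\varepsilon$ run through a sequence tending to $0$ gives $M_N/N\to 0$ almost surely. Fixing $\varepsilon$ with $0<\varepsilon<\alpha-\delta$, we obtain almost surely a random $N_0$ beyond which $X_N \geq X_0 + (\alpha-\varepsilon)N > \delta N$, which simultaneously proves the eventual bound and forces $X_N\to+\infty$.

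The main obstacle is the second step: securing the fourth-moment bound with constants that are genuinely uniform over the non-compact state space, so that the martingale estimate does not deteriorate as the chain wanders. This is exactly where the three uniform moment hypotheses on $X$ are needed — they let me bound $\E(\xi_n^p\mid\mathcal{F}_n)$ independently of the state and hence control both the diagonal and the paired contributions to $\E(M_N^4)$ — while the uniformity (in $x_0$) of the comparison with $\tilde{X}$ is what upgrades the pointwise drift into the clean linear minorant $\alpha N$. Everything else, namely the martingale decomposition, the vanishing of the cross terms, and the Borel--Cantelli step, is routine once these uniform bounds are secured.
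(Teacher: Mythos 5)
Your proposal follows essentially the same route as the paper: Doob decomposition $X_N = X_0+\sum_{n<N}b_n+M_N$ with drift $b_n\geq\alpha$, a fourth-moment bound $\E_x(M_N^4)=O(N^2)$, then Markov's inequality and Borel--Cantelli. The paper additionally applies the Burkholder--Davis--Gundy maximal inequality to $\max_{0\leq n\leq N}(-M_n)$, but since Borel--Cantelli is run over every $N$, your control of $M_N$ alone suffices; that difference is cosmetic. (Your constant bookkeeping, $0<\varepsilon<\alpha-\delta$, is in fact cleaner than the paper's, which writes $\alpha+\delta$ where $\alpha-\delta$ is needed. Also, like the paper, you read the tail hypothesis as giving drift at least $\alpha$; as literally stated the inequality integrates to the \emph{opposite} comparison of means, so the intended hypothesis must be $\PP_{x_0}(X_1-X_0>s)\geq\PP(\tilde{X}>s)$ --- a defect of the statement shared with the paper's own proof, not a gap of yours.)

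The one step you must repair is the combinatorial justification of $\E(M_N^4)\leq C'N^2$. The claim that after discarding monomials whose largest index is unique only the terms $\E(\xi_n^4)$ and $\E(\xi_k^2\xi_l^2)$ survive is true for \emph{independent} centered increments but false for martingale differences: the monomials $\xi_l\xi_m\xi_k^2$ with $l<m<k$ (and $\xi_l\xi_k^3$ with $l<k$) have their largest index repeated, so they are not killed by $\E(\xi_k\mid\mathcal{F}_k)=0$, and in general $\E(\xi_l\xi_m\xi_k^2)=\E\bigl(\xi_l\xi_m\,\E(\xi_k^2\mid\mathcal{F}_k)\bigr)\neq 0$ because the conditional variance is random and correlated with the past. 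There are $\Theta(N^3)$ such terms, each only $O(1)$, so the naive expansion yields $O(N^3)$, and then $\PP_x(|M_N|>\varepsilon N)\leq C/(\varepsilon^4N)$ is not summable --- the proof would collapse. The fix is exactly the inequality you cite in passing, and it is the paper's route: BDG with the \emph{optional} bracket, $\E_x(M_N^4)\leq a_4\,\E_x([M]_N^2)$ with $[M]_N=\sum_{n<N}\xi_n^2$, followed by $\E_x([M]_N^2)=\sum_{k,l}\E_x(\xi_k^2\xi_l^2)\leq CN^2$, bounding each term by conditioning on the larger index (as the paper does) or by Cauchy--Schwarz. Note that it must be the optional bracket: with the predictable bracket $\langle M\rangle_N$, the inequality $\E(M_N^4)\leq C\E(\langle M\rangle_N^2)$ fails in general, and one needs Rosenthal's additional term $\sum_n\E(\xi_n^4)$, which is harmless here since it is $O(N)$. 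With that substitution your argument is complete and matches the paper's.
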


\begin{cort}\label{Corol Divergence a.s.}
Let $(\sigma_n)_n$ a Markov chain of a space $E$ with countable dimension. Let $(X_n)_n$ be one of its coordinates. Let us suppose  \begin{itemize}
\item $\exists X$ random variable in $\R_+$ such that: $\forall x_0 \in \R$, $\E_{x_0} (X) \leq \kappa$, $\E_{x_0} (X^2) \leq \kappa'$ and $\E_{x_0} (X^4) \leq \kappa''$ and such that: $\forall x_0 \in \R$ $\forall n \in \N$
\[
|X_{n+1} - X_n| \leq X \quad \PP_{x_0}\  a.s.
\]

\item $\exists$ $\tilde{X}$ random variable from $(\Omega,\mathcal{A},\PP)$ to $\R$  with positive expectation $\E(X)=\alpha >0$ such that:
\[
\forall s\in\R,\ \forall x_0\in \R, \qquad \PP_{x_0}(X_1 - X_0 > s) \leq \PP(\tilde{X}>s)
\]
\end{itemize} 
Then we have: for a fixed $0<\delta< \alpha$ $\PP_\sigma$ almost surely: $\exists N_0(\sigma) \in \N$, $\forall N> N_0(\sigma)$
		
		\[
		X_N > \delta N \longrightarrow +\infty.
		\]
\end{cort}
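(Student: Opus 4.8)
The plan is to reduce the Corollary to the preceding Lemma by observing that the Lemma's proof does not genuinely require $(X_n)_n$ to be a Markov chain in its own right; it consumes only the law of each increment $X_{n+1}-X_n$ conditionally on the past, together with the two moment/domination controls. First I would fix the filtration $(\mathcal{F}_n)_n$ of the ambient chain, $\mathcal{F}_n = \sigma(\sigma_0,\dots,\sigma_n)$, to which the coordinate $(X_n)_n$ is adapted. The Markov property of $(\sigma_n)_n$ then gives, for every bounded measurable $\varphi$ and every $n$,
\[
\E_\sigma\!\left[\varphi(X_{n+1}-X_n)\mid \mathcal{F}_n\right] = \E_{\sigma_n}\!\left[\varphi(X_1 - X_0)\right]\quad \PP_\sigma\text{-a.s.},
\]
so both hypotheses transfer to conditional statements: $\PP_\sigma(X_{n+1}-X_n>s\mid \mathcal{F}_n)$ is dominated by $\PP(\tilde X>s)$, and $|X_{n+1}-X_n|\le X$ with the moment bounds $\kappa,\kappa',\kappa''$ holding uniformly, precisely because the assumed bounds are uniform in the starting coordinate $x_0$. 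This is exactly the input the Lemma's argument uses, so its proof runs verbatim with $(\mathcal{F}_n)_n$ in place of the natural filtration of $(X_n)_n$.

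Concretely I would carry out the Lemma's scheme as follows. Writing $D_k := X_{k+1}-X_k$, decompose
\[
X_N - X_0 = \sum_{k=0}^{N-1} \E_\sigma[D_k\mid \mathcal{F}_k] + M_N, \qquad M_N := \sum_{k=0}^{N-1}\bigl(D_k - \E_\sigma[D_k\mid \mathcal{F}_k]\bigr),
\]
where $(M_N)_N$ is an $(\mathcal{F}_N)$-martingale. The stochastic-domination hypothesis bounds each conditional increment mean $\E_\sigma[D_k\mid \mathcal{F}_k]$ below by $\alpha=\E[\tilde X]$ uniformly in $k$, so the drift term is at least $\alpha N$; this is what forces linear growth and is why one needs $\delta<\alpha$. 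It then remains to show the fluctuation is negligible, $M_N/N\to 0$ $\PP_\sigma$-a.s., after which $X_N \ge X_0 + \alpha N + M_N > \delta N$ for all large $N$ yields simultaneously $X_N>\delta N$ and $X_N\to+\infty$.

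For the fluctuation estimate I would use the fourth-moment control: since $|D_k|\le X$ with $\E_{x_0}[X^4]\le\kappa''$ uniformly, the martingale differences are bounded in $L^4$, and expanding $\E_\sigma[M_N^4]$ (the odd cross terms vanish by the martingale property, the mixed squares being controlled by $\kappa,\kappa',\kappa''$) gives $\E_\sigma[M_N^4]\le C N^2$ for a constant $C$ depending only on $\kappa,\kappa',\kappa''$; a Burkholder--Davis--Gundy estimate gives the same bound. Chebyshev at the fourth moment then yields $\sum_N \PP_\sigma(|M_N|>\varepsilon N)<\infty$ for every $\varepsilon>0$, and Borel--Cantelli delivers $M_N/N\to 0$ almost surely. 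The main obstacle is bookkeeping rather than conceptual: one must verify that every appeal to the Markov property of $(X_n)_n$ in the Lemma can be replaced by conditioning on $\mathcal{F}_n$ and invoking the ambient Markov property, and that the uniformity in the starting state $x_0$ really is uniform over all values of the omitted coordinates of $\sigma_n$. Once these conditional versions of the hypotheses are in hand, no idea beyond the Lemma is needed.
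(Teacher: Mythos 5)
Your proposal is correct and takes essentially the same route as the paper: the paper proves the corollary by rerunning the lemma's argument (Doob decomposition, fourth-moment control of the martingale part, Chebyshev and Borel--Cantelli) with the conditioning and the (strong) Markov property applied to the ambient chain $(\sigma_n)_n$ rather than to $(X_n)_n$, which is precisely your replacement of conditioning on $X_n$ by conditioning on $\mathcal{F}_n=\sigma(\sigma_0,\dots,\sigma_n)$ together with uniformity of the bounds over the omitted coordinates. The only cosmetic difference is that you bound $\E_\sigma\!\left[M_N^4\right]$ by direct expansion where the paper invokes the Burkholder--Davis--Gundy inequality on the quadratic variation, but as you yourself note both yield the same $O(N^2)$ estimate and hence the same summability.
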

The proof of the corollary is the same as the proof of the lemma except that the initial condition is for example with $\sigma \in E$ $\lbrace\sigma_0 = \sigma\rbrace$ and the Strong Markov property are done on $(\sigma_n)_n$. Let us begin the proof of the lemma.
\begin{proof}
To prove that $X_n \longrightarrow + \infty$ almost surely we firstly prove that $(X_n)_n$ is a sub-martingale and use Doob decomposition, then we upper bound the expectation of the quadratic variation of the martingale part of $X_n$ in order to finally use Borel Cantelli's Lemma with Burkholder-Davis-Gundy inequality.
	\begin{enumerate}
	\item Let us prove that $(X_n)_n$ is a sub-martingale. Let $x\in \R$.
	\begin{align*}
		\E_x(X_{n+1} | X_n) & = \E_x \left(X_n + X_{n+1} - X_{n} | X_n\right) \\ 
		 & = X_{n} + \E_{X_n}  \left(X_1 - X_{0}\right) \\
		 & \geq X_{n} + \alpha \\
		 & \geq X_{n}.
		\end{align*}
		Then using a Doob decomposition we get: for all $x\in \R$
		 \[X_{n} = M_n + \sum\limits_{k = 0}^{n-1} \E_x \left(X_{k+1} - X_{k} \vert X_k \right) \] with $M_0 =0$ and $(M_n)_n = \left(\sum\limits_{k=0}^{n-1} X_{k+1} - \E_x\left(X_{k+1} | X_{k}\right)\right)_n$ a martingale.
		
		\item Now let us upper bound for all $x\in \R$ and all $N \in \N$, $\E_x\left([M_N]^2\right) = \E_x \left(\left(\sum\limits_{k = 0}^{N-1} (M_{k+1} - M_{k})^2\right)^2\right)$
		\begin{itemize}
		\item Let us prove that $\exists c>0$ such that $\forall k \in \N$ $\forall x \in \R$ $\E_\sigma((M_{k+1} - M_{k})^2) \leq c$.
		First let us rewrite $M_{k+1} - M_k$ in function of the variation of $X_{k}$.
		\[
		M_{k+1} -M_k = X_{k+1} - X_{k} - \E_x(X_{k+1} - X_{k} | X_{k}).
		\]
		We get using Strong Markov property:
		\[
		\E_x\left(\vert X_{k+1} - X_{k}\vert \, | X_k\right) = \E_{X_k} (|X_{1} - X_{0}|) \in \left[- \kappa,\kappa \right]
 		\]
 		Then we get:
 		\[
 		\left|X_{k+1} - X_{k} - \E_x(X_{k+1} - X_{k} |X_k)\right| \leq \left|  X_{k+1} - X_{k} \right| + \kappa
 		\]
 		Then using Strong Markov property we have: $\forall x \in \R$
	 	
	 	\[\E_\sigma((M_{k+1} - M_k)^2) \leq  \E_\sigma\left(\E_{X_k}\left(X_{1} - X_{0}\right)^2\right)  + 3 \kappa^2 \leq \kappa' + 3 \kappa^2 := c\]

	 	\item Let us prove that $\exists c'>0$ such that $ \forall N\in \N$ and $\forall x \in \R$ $\E_x (\left[M_N\right]^2) \leq N^2 c^2 + N c'$\\
	 		Using the previous arguments we have: $\exists c' >0$ such that for all $k \in \N$:\[
	 		 	 \E_x((M_{k+1} - M_k)^4) \leq c'
	 		 	\]
	 	
	 	We have: 
	 	\begin{align*}
	 	\E_x (\left[M_N\right]^2) &= \sum_{0 \leq k,\ell \leq N - 1} \E_x\left((M_{k+1} - M_k)^2 (M_{\ell +1} - M_\ell)^2\right) \\  &=\sum_{0 \leq k  \neq \ell \leq N - 1} \E_x\left((M_{k+1} - M_k)^2 (M_{\ell +1} - M_\ell)^2\right) + \sum_{k = 0}^{N- 1}\E_x\left( (M_{k + 1} - M_k)^4\right)
	 	\end{align*}

	 	 Using Strong Markov property we have for all $k < \ell$:
	 	 \begin{align*}
	 	 \E_x \left((M_{k+1} - M_k)^2 (M_{\ell + 1} - M_\ell)^2\right) & = \E_x \left((M_{k+1} - M_k)^2 \E_x\left((M_{\ell + 1} - M_\ell)^2| \mathcal{F}_{\tau^i_\ell}\right)\right) \\
	 	 &= \E_x\left((M_{k+1} - M_k)^2 \E_{X_{\tau^i_\ell}} (M_{\ell - k + 1} - M_{\ell - k})  \right)\\
	 	 &\leq c \E_x((M_{k+1} - M_k)^2) \leq c^2.
	 	 \end{align*}
	 	 Finally we have:
	 	 \begin{eqnarray}
	 	 \E_x (\left[M_N\right]^2) \leq N^2 c^2 + N c'
	 	 \end{eqnarray}
		\end{itemize}
		\item Firstly let us notice that $\E_x (\left[- M_N \right]^2) \leq N^2 c^2 + N c'.$ \\
		To finish the divergence of $(X_n)_n$ we need Burkhholder-Davis-Gundy inequality \cite{beiglbock2015pathwise} which says that for $1\leq p< +\infty$ there exists $a_p < +\infty$ such that for every $N \in \N$ and every $(\mathcal{M}_n)_n$ martingale we have:
		\[
		\E\left( \left(\max_{1\leq n\leq N}\mathcal{M}_n\right)^p\right) \leq a_p \E \left(\left[ \mathcal{M}_N\right]^{p/2}\right)
		\]
		Then we have:  $0<\delta< \alpha$
		\begin{align*}
		\PP_x\left(\min_{0 \leq n\leq N} M_n < -N (\alpha + \delta) \right) &= \PP_x\left(\max_{0 \leq n \leq N} -M_n > N(\alpha + \delta)\right) \\
		& \leq \PP_x \left( \left(\max_{0 \leq n \leq N} -M_n\right)^4 > {N}^4(\alpha + \delta)^4 \right) \\
		& \leq \frac{1}{N^4 (\alpha + \delta)^4 } \E_x\left(\left(\max_{0 \leq n \leq N} -M_n\right)^4\right) \\
		& \leq a_4 \frac{1}{N^4 (\alpha + \delta)^4 } \E_x (\left[-M_N\right]^2) \leq a_4 \frac{N^2c^2 + Nc'}{N^4 (\alpha + \delta)^4 }  
		\end{align*}
		which is summable. \\
		Applying Borel Cantelli's lemma we have $\PP_x$ almost surely: $\exists N_0(\sigma) \in \N$, $\forall N> N_0(x)$
		
		\[
		X_N \geq M_N + \alpha N > \delta N \longrightarrow +\infty.
		\]
		\end{enumerate}  
\end{proof}
Let us now prove Theorem \ref{thm Coexistence ad vitam eternam Spatial}.\\
\begin{proof}
	The configuration $\sigma^0$ is fixed for the sequence of the proof.

	\begin{definition}
		For all configuration $\sigma \in \mathcal{C}$, for a cooperator $i \in {N}_\mathfrak{C}(\sigma)$  we denote ${\tau}_1^i$ the first time where there is a game between $i$ and a cooperator.
		\[
		{\tau}_1^i = \inf \left\lbrace n\in \N^* / {\mathfrak{C}}^i_{n} - {\mathfrak{C}}^i_{n-1} > 0 \right\rbrace
		\]
		Let us denote $({\tau}^i_n)_n$ the sequence of these cumulated stopping times (by the same way we define the cumulative stopping times in the previous proof) with ${\tau}^i_0 = 0$ for all cooperators.
	\end{definition}

	The issue here is that we can have extinction of the cooperators. Hence we don't have $\forall \sigma \in \mathcal{C}$ $\forall i \in N_\mathfrak{C}(\sigma)$ ${\tau}_1^i < +\infty$ $\PP_\sigma$ a.s.\\
	Hence we have to consider an other system and couple them. \begin{enumerate}
		\item We call the system introduced in Section \ref{Spatial Model} the \textbf{True System}. Let $\mathfrak{C}^i_n$ be the wealth of cooperator $i \in N_\mathfrak{C}(\sigma) $ at time $n\in \N$ in the \textbf{True System}.
		\item The other system is called the \textbf{Ghost system}. In this system we have that: \begin{itemize}
			\item cooperators and defectors can play even if they have negative wealth,
			\item cooperators cannot give birth, 
			\item at each step of time, there is a decrease of $w_0$ of cooperator wealths,
			\item when defectors give birth they don't loose wealth (this part is not necessary to make the proof but make the proof easier).
			\end{itemize}
	\end{enumerate}

	We denote the probability measure associated to the \textbf{Ghost system} $\G$. For example probability that the wealth of cooperator $i \in {N}_\mathfrak{C}(\sigma^0)$ is 3 in the \textbf{Ghost system} is denoted $\G_{\sigma^0}\left({\mathfrak{C}}^i_n = 3\right)$.\\
	It is the following system $(\overline{\sigma}_t)_t$ that we couple to $(\sigma_t)_t$.
	Since the cooperators won't die in the \textbf{Ghost system} we have $\forall \sigma \in \mathcal{C} $ $\forall i \in N_{\mathfrak{C}}(\sigma)$ $\overline{\tau}_1^i < +\infty$ $\G_\sigma$ a.s. also we have the following upper bounding lemma. The proof is the same as Lemma \ref{Lemme Majoration uniforme Extinction}, the only modification is to change $T_n = \lbrace \tau_1 = n \rbrace$ by $T_n = \left\lbrace \max\limits_{i \in N_\mathfrak{C}(\sigma^0)} \tau_1^i = n \right\rbrace$
	
	\begin{lemme}\label{Lemme Maj unif Coexistence}
			There is $\overline{\mathfrak{m}} \in \N$ and $\overline{\varepsilon} >0$ such that for each configuration $\sigma \in E$ with at least two cooperators and all $s\in \R$ we have: 
				\[
				\G_\sigma\left(\max\limits_{i \in N_\mathfrak{C}(\sigma)}{\tau}_1^i > s\right) \leq \left(1 - \overline{\varepsilon} \right)^{\lfloor s/\bar{\mathfrak{m}}\rfloor }
				\]
	\end{lemme}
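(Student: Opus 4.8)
The plan is to reproduce, essentially line by line, the argument of Lemma \ref{Lemme Majoration uniforme Extinction}, with the single first-passage time $\tau_1$ replaced by the ``all cooperators have played'' time $M := \max_{i \in N_\mathfrak{C}(\sigma)} \tau_1^i$. First I would record that in the \textbf{Ghost system} cooperators never die, so their number stays constant and $\geq 2$ for every $\sigma$ having at least two cooperators; since the walks are recurrent on the finite torus $(\Z/m\Z)^2$ and two cooperators already make a cooperator--cooperator game reachable, each $\tau_1^i$ is finite $\G_\sigma$-a.s., whence $M<+\infty$ $\G_\sigma$-a.s. I then set $T_n=\{M=n\}$ and, exactly as before, $\overline{\mathfrak{m}}=\max_\sigma \min\{n\in\N : \G_\sigma(T_n)>0\}$, the maximum running over the finitely many photographs carrying at least two cooperators; finiteness of the number of photographs guarantees $\overline{\mathfrak{m}}<+\infty$.

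The second step produces the uniform constant $\overline{\varepsilon}$, and here the situation is even cleaner than in Lemma \ref{Lemme Majoration uniforme Extinction}: in the \textbf{Ghost system} games happen irrespective of the players' wealth, so whether every cooperator has played a cooperator--cooperator game within $\overline{\mathfrak{m}}$ realizations depends only on the photograph (positions and strategies) and on the driving choices $(C_n),(E^g_n),(E^m_n),(V_n)$, never on the wealth coordinates. Since there are finitely many photographs and each admits a positive-probability sequence of at most $\overline{\mathfrak{m}}$ moves and games realizing $\{M\leq\overline{\mathfrak{m}}\}$, taking the minimum over this finite set yields $\overline{\varepsilon}>0$ with
\[
\G_\sigma\!\left(M\leq \overline{\mathfrak{m}}\right)\geq \overline{\varepsilon},\qquad \text{equivalently}\qquad \G_\sigma\!\left(M> \overline{\mathfrak{m}}\right)\leq 1-\overline{\varepsilon},
\]
uniformly over all $\sigma$ with at least two cooperators.

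The last step is the geometric iteration, and I expect the main obstacle to be precisely that $M$ is a \emph{maximum} over cooperators rather than a single first-passage time, so the clean ``$\tau_1 = \overline{\mathfrak{m}} + \tau_1\circ\theta_{\overline{\mathfrak{m}}}$'' restart identity is not available verbatim. The fix is a monotone inclusion: if not every cooperator has played within $k\overline{\mathfrak{m}}$ steps, then some fixed cooperator $i$ has not played within the first $\overline{\mathfrak{m}}$ steps, and, restarting the chain at $\sigma_{\overline{\mathfrak{m}}}$, that same $i$ has not played within the next $(k-1)\overline{\mathfrak{m}}$ steps. Because cooperators do not die, $\sigma_{\overline{\mathfrak{m}}}$ still carries at least two cooperators, so the induction hypothesis applies to it. The Strong Markov property then gives
\[
\G_\sigma\!\left(M>k\overline{\mathfrak{m}}\right)\leq \sum_{\sigma^1}\G_\sigma\!\left(M>\overline{\mathfrak{m}},\,\sigma_{\overline{\mathfrak{m}}}=\sigma^1\right)\,\G_{\sigma^1}\!\left(M>(k-1)\overline{\mathfrak{m}}\right),
\]
and bounding $\G_{\sigma^1}(M>(k-1)\overline{\mathfrak{m}})\leq(1-\overline{\varepsilon})^{k-1}$ by induction while using $\sum_{\sigma^1}\G_\sigma(M>\overline{\mathfrak{m}},\sigma_{\overline{\mathfrak{m}}}=\sigma^1)=\G_\sigma(M>\overline{\mathfrak{m}})\leq 1-\overline{\varepsilon}$ closes the estimate to $(1-\overline{\varepsilon})^{k}$. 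Taking $k=\lfloor s/\overline{\mathfrak{m}}\rfloor$ and invoking the monotonicity of $s\mapsto\G_\sigma(M>s)$ yields the claimed bound.
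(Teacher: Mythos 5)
Your proof is correct and follows essentially the same route as the paper: the paper's own proof of this lemma is literally the one-line instruction to repeat the argument of Lemma \ref{Lemme Majoration uniforme Extinction} with $T_n = \lbrace \tau_1 = n\rbrace$ replaced by $T_n = \lbrace \max_{i \in N_\mathfrak{C}(\sigma)} \tau_1^i = n\rbrace$, which is exactly what you do. Your extra care at the iteration step---replacing the first-passage restart identity by the monotone inclusion $\lbrace M > k\overline{\mathfrak{m}}\rbrace \subseteq \lbrace M > \overline{\mathfrak{m}}\rbrace \cap \lbrace M\circ\theta_{\overline{\mathfrak{m}}} > (k-1)\overline{\mathfrak{m}}\rbrace$, together with the observations that the ghost dynamics never lets cooperators die (so $\sigma_{\overline{\mathfrak{m}}}$ still has at least two cooperators) and that the event is wealth-independent---is precisely the detail the paper leaves implicit, and you handle it correctly.
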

	We denote $G$ the expectation using the probability measure $\G$. We look at the wealth of one individual then the birth it gives, make its wealth decrease, then the upper bounding of the variation of wealth has to take it into account. Using the same argument as those in the previous theorem, we prove that: $\exists \nu >0$ such that: $\forall n \in \N$ $\forall \sigma \in E$ with at least two cooperators 
		\[
		G_\sigma\left({\mathfrak{C}}^i_{{\tau}^i_{n+1}} - {\mathfrak{C}}^i_{{\tau}^i_n}\right) \geq \mathbf{R} - \nu (\mathbf{S} + w_0):= \alpha >0
		\]
		
		Also for all $n \in \N$ we have: (because $\mathbf{R} < \mathbf{S + w_0}$)
		\[
		|{\mathfrak{C}}^i_{{\tau}^i_{n+1}} - {\mathfrak{C}}^i_{{\tau}^i_n}| \leq (\mathbf{S} + w_0) \tau^i
		\]
		Then using Lemma \ref{Lemme Maj unif Coexistence} we can apply Corollary \ref{Corol Divergence a.s.} to get: for a fixed $0 < \delta < \alpha$ $\forall i \in N_\mathfrak{C}(\sigma)$ $\G_\sigma$ almost surely:
		\[
		{\mathfrak{C}}^{i}_{{\tau}^i_{n}} > \delta \tau^i_n \longrightarrow +\infty
		\]
		
		Then intersecting those events we have for a fixed $0 < \delta < \alpha$,  $\G_\sigma$ almost surely: 
				 \begin{eqnarray}\label{Divergence vers infty}
				\forall i\in N_\mathfrak{C}(\sigma),\qquad {\mathfrak{C}}^{i}_{{\tau}^i_{n}} > \delta \tau^i_n  \longrightarrow +\infty.
				 \end{eqnarray}
	
		\vspace{15 pt}
		Now let us prove that $\min\limits_{i \in N_\mathfrak{C}(\sigma)}{\mathfrak{C}}^{i}_{n}$ cannot go under 0 infinitely often. Because we have $\forall n\in \N$ $\forall k \in \lbrace {\tau}^i_n ,\dots,{\tau}^i_{n+1}\rbrace$   ${\mathfrak{C}}^{i}_{k} > {\mathfrak{C}}^{i}_{{\tau}^i_n} - (\mathbf{S}+w_0)({\tau}^i_{n+1} - {\tau}_n)$ and because ${\mathfrak{C}}^{i}_{{\tau}^i_n}>\delta {\tau}^i_n \geq n$ we have for all $n\in \N$:
		
\begin{align*}
		\lbrace \exists k\in \lbrace \tau^i_n, \dots,\tau^i_{n+1}\rbrace , \mathfrak{C}^i_k \leq 0 \rbrace &\subset \left\lbrace \delta \tau_i^n \leq (\mathbf{S} + w_0) (\tau_{n+1}^i - \tau_n^i) \right\rbrace \\& \subset \left\lbrace \min_{j \in N_\mathfrak{C}(\sigma)} \delta \tau^j_n \leq (\mathbf{S} + w_0) \max_{j \in N_\mathfrak{C}(\sigma)} (\tau^j_{n+1} - \tau_n^j)\right\rbrace \\& \subset \left\lbrace  \delta n \leq (\mathbf{S} + w_0) \max_{j \in N_\mathfrak{C}(\sigma)} (\tau^j_{n+1} - \tau_n^j)\right\rbrace
\end{align*}
		Hence we get: for all $n\in \N$
		
		\begin{align*}
		\G_\sigma\left(\min_{i \in N_\mathfrak{C}(\sigma)} \min_{{\tau}^i_n \leq k \leq {\tau}^i_{n+1}} {\mathfrak{C}}^{i}_{k} \leq 0\right) & \leq K \G_\sigma\left(\max_{i \in N_\mathfrak{C}(\sigma)}({\tau}^i_{n+1} - {\tau}^i_n)(\mathbf{S} + w_0) > \delta n\right) \\
		& \leq K (\mathbf{S} + w_0)^2 \frac{1}{\delta^2 n^2} G_\sigma\left(\max_{i \in N_\mathfrak{C}(\sigma)}({\tau}^i_{n+1} - {\tau}^i_n)^2\right)
		\end{align*}
		which is summable using Lemma \ref{Lemme Maj unif Coexistence}.
		Then we have with (\ref{Divergence vers infty}) \begin{eqnarray}\label{Passage sous 0 infiniment souvent}
		\G_\sigma\left( \min_{i \in N_\mathfrak{C}(\sigma)}{\mathfrak{C}}^{i}_{n} \leq 0 \ i.o\right) = 0.
		\end{eqnarray}
		We do the same reasoning with the defectors. We have $\exists \nu' >0$ and for all $i\in N_\mathfrak{D}(\sigma)$ there exists $(\tilde{\tau}_n^i)$ such that there is $\tilde{\mathfrak{m}} \in \N$ and $\tilde{\varepsilon}>0$ such that for each configuration $\sigma \in E$ with at least one cooperator and one defector and all $s\in \R$ we have:
		\[
		\mathbb{G}_\sigma\left(\max_{i\in N_\mathfrak{D}(\sigma)} \tilde{\tau}_1^i > s \right) \leq (1 - \tilde{\varepsilon})^{\lfloor s/ \tilde{\mathfrak{m}} \rfloor}
		\]
		
		Then we get for a fixed $0< \delta'<\mathbf{T} - \nu' (2\mathbf{P}+w_0)$, $\mathbb{G}_\sigma$ almost surely:
	
		\begin{eqnarray}\label{Divergence vers infty pour defector}
		\forall i \in N_\mathfrak{D}(\sigma), \qquad \mathfrak{D}^i_{\tilde{\tau}^i_n} > \delta' \tilde{\tau}^i_n \longrightarrow +\infty 
		\end{eqnarray}
	
		and also we have:
		
	\begin{eqnarray}\label{Passage sous 0 infiniment souvent pour defector}
		\mathbb{G}_\sigma\left(\min_{i \in N_\mathfrak{D}(\sigma)} \mathfrak{D}^i_n \leq 0 \ i.o. \right) = 0
	\end{eqnarray}

		Let $D = \min \left(\min\limits_{i \in N_\mathfrak{C}(\sigma)}\min\limits_{n \in \N} {\mathfrak{C}}^{i}_n, \min\limits_{i \in N_\mathfrak{D}(\sigma)}\min\limits_{n \in \N} {\mathfrak{D}}^{i}_n\right)$, using (\ref{Divergence vers infty}), (\ref{Passage sous 0 infiniment souvent}), (\ref{Divergence vers infty pour defector}) and (\ref{Passage sous 0 infiniment souvent pour defector}) we have for all $\sigma \in \mathcal{C}$ $\G_\sigma(D = -\infty) = 0$. A a consequence for $0<p\leq 1$ fixed, $\exists$ $L \in \R$ such that:
		\[
		\G_\sigma(D \leq L) \leq p
		\] 
		Let us denote $\sigma + L$ the set of configuration such that for all $\bar{\sigma} \in \sigma + L$ $\mathfrak{p}(\sigma) = \mathfrak{p}(\bar{\sigma})$ and the wealth of every individual with positive wealth in ${\sigma}$ is increased by at least $|L|$ in $\bar{\sigma}$. Then if in $\sigma$ an individual has a wealth of $k > 0$, in configurations of $\sigma + L$ he has a wealth of $k + |L|$. As a consequence we have $N_\mathfrak{C}(\bar{\sigma}) = N_\mathfrak{C}(\sigma)$ and $N_\mathfrak{D}(\bar{\sigma}) = N_\mathfrak{D}(\sigma)$.\\
		Since in the \textbf{Ghost system}, the evolution of wealth only depends on the photographs we have: for all $\bar{\sigma} \in \sigma + L$
		\[
		\G_{{\sigma}} (\forall n\in \N \  \forall i\in N_\mathfrak{C}(\sigma) \cup N_\mathfrak{D}(\sigma),\  Y^i_n > 0 ) \geq 1 - p
		\]
		Let $n_0 \in \N$ such that:
		\[
		\G_\sigma(\forall n \in \lbrace 0,\dots,n_0\rbrace,\forall i \in N_\mathfrak{C}(\sigma) \cup N_\mathfrak{D}(\sigma),\  Y_n^i >0 \text{ and } {\sigma}_{n_0}\in \sigma + L  ) > 0
		\]
		We denote $\lbrace \forall n \in \lbrace 0,\dots,n_0\rbrace,\forall i \in N_\mathfrak{C}(\sigma)\cup N_\mathfrak{D}(\sigma),\  Y_n^i >0 \text{ and } {\sigma}_{n_0}\in \sigma + L  \rbrace :=  E_{n_0}(\sigma + L)$. We have using Markov property: $\G_\sigma(\forall n \in \N, \forall i \in N_\mathfrak{C}(\sigma)\cup N_\mathfrak{D}(\sigma),\  Y_n^i >0)$ is equal to:
	 \[
	 \G_\sigma(E_{n_0}(\sigma +L)) \G_{\sigma +L}(\forall n \in \N,\forall i \in N_\mathfrak{C}(\sigma + L)\cup N_\mathfrak{D}(\sigma),\  Y_n^i >0) >0
	 \]

		Since in $\lbrace \forall n \in \N, \forall i \in N_\mathfrak{C}(\sigma)\cup N_\mathfrak{D}(\sigma),\  Y_n^i >0 \rbrace$ cooperators always have positive wealth, on this event the \textbf{Ghost system} is just the \textbf{True system} and we get:
		\[
					\PP_\sigma (\forall n \in \N, \forall i \in N_\mathfrak{C}(\sigma)\cup N_\mathfrak{D}(\sigma),\  Y_n^i >0) > 0
		\]
\end{proof}

\section{Proof on the Mean Field system}
\subsection{Proof of Theorem \ref{thm Modele simplifié Garanti}}
In this part we will prove Theorem \ref{thm Modele simplifié Garanti}

	Let us begin the \textit{Proof}.\vspace{5pt}\\ 
	 Using the Law of Large Numbers we have for all $q_0 >0$,  $\PP_{q_0}$ a.s. for all $t>0$ $\exists N \in \N$ such that $t_N \leq t < t_{N+1}$
	  	  \[
	  	  \frac{1}{N} \overline{\mathfrak{C}}(t) = \frac{q_0}{N} + \frac{1}{N} \sum_{n=1}^{N} \overline{U}_\mathfrak{C}(t_n^\mathfrak{C}) \underset{N \to +\infty}{\longrightarrow} \beta^0  \mathbf{R} - \rho^0 \mathbf{S}
	  	  \]
	  	  Then if $\beta^0 \mathbf{R} - \rho^0 \mathbf{S} >0$ we have $\PP_{q_0}$ a.s. $\overline{\mathfrak{C}}(t) \underset{t \to \infty}{\longrightarrow} +\infty$ and since the increasing are bounded, as a consequence for all $\eta >0$ there exists $\overline{M} \in \mathbf{R}\Z + \mathbf{S}\Z$ such that:
	  	  \[
	  	  \PP_{q_0}\left( \min_{t\in \R_+} \overline{\mathfrak{C}}(t) > \overline{M}\right) \geq 1 - \eta
	  	  \]
	  	  Since $\overline{C}$ is a Levy process \emph{i.e.} since the increments of $\overline{\mathfrak{C}}$ don't depend on where $\overline{\mathfrak{C}}$ is, we have:
	  	  \[
	  	  \PP_{q_0 + |\overline{M}|} \left(\min_{t \in \R_+} \overline{\mathfrak{C}}(t)> \underbrace{\overline{M} + |\overline{M}|}_{>0}\right) \geq 1 - \eta
	  	  \]
	  	  Let $T>0$ be such that \[
	  	  \PP_{q_0}\left(\min_{t \leq T} \overline{\mathfrak{C}}(t) >0, \overline{\mathfrak{C}}(T) = q_0 + |\overline{M}|\right) >0
	  	  \]
	  	  Hence we get using Markov property:
	  	  
	\[  	 
	 \PP_{q_0}\left(\min_{t\in \R_+} \overline{\mathfrak{C}}(t)> 0\right) \geq \underbrace{\PP_{q_0} \left(\min_{t\leq T} \overline{\mathfrak{C}}(t)> 0, \overline{\mathfrak{C}}(T) = q_0 + |\overline{M}|\right)}_{>0} \underbrace{\PP_{q_0 + |\overline{M}|}\left(\min_{t\in \R_+} \overline{\mathfrak{C}}(t)> 0\right)}_{\geq 1 - \eta} >0
	\]
	 	 \qed

\subsection{Proof of Proposition \ref{Prop Concentration Wealth MF DPD}}

	  	  Firstly let us notice that $(\overline{\mathfrak{C}}(t))_t$ only take values in $q_0 + \mathbf{R}\Z + \mathbf{S}\Z$.
	  	  Then we get:
	  	  
	  	  \begin{align*}
	  	  \dro \E(\overline{\mathfrak{C}}(t)) &= \sum_{k,\ell \in \Z} (q_0 + k \mathbf{R} - \ell \mathbf{S}) \dro \PP(\overline{\mathfrak{C}}(t) = q_0 + k \mathbf{R} - \ell \mathbf{S}) \\
	  	  &=   {v} (\beta^0\mathbf{R} - \rho^0 \mathbf{S})
	  	  \end{align*}
	  	  
	  	  Let us compute for all $t>0$ the variance of $\overline{\mathfrak{C}}(t)$.
	  	  \begin{align*}
	  	  \dro \E(\overline{\mathfrak{C}}(t)^2) &= \sum_{k,\ell \in \Z } (q_0 +k\mathbf{R} -\ell \mathbf{S})^2 \dro \PP(\overline{\mathfrak{C}}(t) = q_0 + k \mathbf{R} - \ell \mathbf{S}) \\
	  	   & = \sum_{k,\ell \in \Z } (q_0 + (k-1)\mathbf{R} - \ell \mathbf{S} + \mathbf{R})^2 \beta^0 \PP(\overline{\mathfrak{C}}(t) =q_0 + (k-1)\mathbf{R} - \ell \mathbf{S}) \\
	  	    & \hspace{10pt} + \sum_{k,\ell \in \Z} (q_0 + k \mathbf{R} -(\ell -1)\mathbf{S} - \mathbf{S})^2 \rho^0 \PP(\overline{\mathfrak{C}}(t) = q_0 +k\mathbf{R} - (\ell - 1)\mathbf{S}) \\
	  	    &\hspace{10pt} - \E(\overline{\mathfrak{C}}(t)^2) \\
	  	    &= \beta^0 \mathbf{R}^2 + \rho^0 \mathbf{S}^2 + 2 \E(\overline{\mathfrak{C}}(t)) \dro \E(\overline{\mathfrak{C}}(t))\\
	  	    &= \beta^0 \mathbf{R}^2 + \rho^0 \mathbf{S}^2 + \dro \E(\overline{\mathfrak{C}}(t))^2
	  	  \end{align*}
	  	  
	  	  Then we have: $\forall t>0$
	  	  \begin{eqnarray}
	  	  \V(\overline{\mathfrak{C}}(t)) = \left(\beta^0 \mathbf{R}^2 + \rho^0 \mathbf{S}^2\right)t
	  	  \end{eqnarray}
	  	  Then using the Chebyshev inequality we have the following concentration inequality: for all $\varepsilon_t >0$
	  	  
	  	  \begin{eqnarray}\label{Inegalité Controle Fantome Tchebychev}
	  	  \PP(|\overline{\mathfrak{C}}(t) - \E(\overline{\mathfrak{C}}(t))| > \varepsilon_t) \leq {v} \frac{t}{\varepsilon_t^2} \left(\beta^0\mathbf{R}^2 + \rho^0\mathbf{S}^2\right).
	  	  \end{eqnarray}
	  	  
	  	  We now take for all $\eta >0$, $\varepsilon^\eta_t = \eta \sqrt{\frac{v}{2}t\left(\beta^0\mathbf{R}^2 + \rho^0\mathbf{S}^2\right)}$ and obtain (\ref{eqn tchebychev MF DPD}).\\
	  	  
	  	  Moreover we notice that 
	  	   \[\E(\overline{\mathfrak{C}^\eta}(t)) - \varepsilon_t^\eta = q_0+t(\beta^0(1 - {\eta^{-2}})\mathbf{R}-\rho^0 \mathbf{S})-\eta \sqrt{{v}(\beta^0\mathbf{R}^2+\rho^0\mathbf{S}^2)}\sqrt{t}.\] 
	  	  	 The minimum of $t \mapsto q_0 + t(\beta^0\mathbf{R} - \rho^0 \mathbf{S}) - \eta\sqrt{{v}(\beta^0\mathbf{R}^2+\rho^0\mathbf{S}^2)} \sqrt{t}$ is reached in \[t = \frac{v\eta^2 (\beta^0\mathbf{R}^2+\rho^0\mathbf{S}^2)}{4(\beta^0 \mathbf{R} - \rho^0 \mathbf{S})^2}.\]  This minimum is $q_0 - \frac{v \eta^2 ((\beta^0\mathbf{R}^2+\rho^0\mathbf{S}^2))}{4(\beta^0\mathbf{R} - \rho^0 \mathbf{S})}.$ Hence for all $\tau$ such that:
	  	  		  \[ \tau +  q_0 - \frac{\eta^2 v (\beta^0\mathbf{R}^2+\rho^0\mathbf{S}^2)}{4(\beta^0 \mathbf{R} - \rho^0 \mathbf{S})} > 0 \]
	  	  	We have: 
	  	  	\[
	  	  	\PP_{q_0} \left(\overline{\mathfrak{C}}(t) \geq \tau \right) \geq 1 - \eta^{-2}.
	  	  	\]
	  	  	  
\section*{Acknowledgment}
I would like to thank my advisors for the review Laurent Miclo and J\'er\^ome Renault. Thanks to Xavier Bressaud for initiating me into the world of Research and to giving me this topic.

\bibliographystyle{apalike}
\bibliography{main}

\end{document}